\tikzstyle{red dot}=[fill={rgb,255: red,255; green,37; blue,102}, draw=black, shape=circle]
\tikzstyle{green}=[fill={rgb,255: red,81; green,255; blue,7}, draw=black, shape=circle, tikzit category=green dot]
\tikzstyle{orange}=[fill={rgb,255: red,255; green,128; blue,0}, draw={rgb,255: red,255; green,128; blue,0}, shape=circle]
\tikzstyle{For labels}=[fill=white, draw=red, shape=circle]
\tikzstyle{FOR LABEL-GREEN}=[fill=white, draw={rgb,255: red,28; green,179; blue,11}, shape=circle]
\tikzstyle{blue}=[fill=blue, draw=blue, shape=circle]
\tikzstyle{Rectangle}=[fill=white, draw=red, shape=rectangle]
\tikzstyle{new edge style 0}=[->]
\DeclareMathOperator*{\bigast}{\text{\LARGE{\textasteriskcentered}}}
\newcommand{\nclose}[1]{\ensuremath{\langle\!\langle#1\rangle\!\rangle}}
\newtheorem{thm}{Theorem}[section]
\newtheorem{exmp}{Example}[section]
\newtheorem{theorem}[thm]{Theorem} \newtheorem{proposition}[thm]{Proposition}  
\newtheorem{lemma}[thm]{Lemma}
\newtheorem{corollary}[thm]{Corollary}
\theoremstyle{definition}
\newtheorem{definition}[thm]{Definition}
\newtheorem{remark}[thm]{Remark}
\newcommand{\msf}{\mathsf}
\title[]{Relative Dehn fuctions, hyperbolically embedded subgroups and combination theorems}
\author{Hadi Bigdely}
\author{Eduardo Mart\'inez-Pedroza }
\date{\today}
\begin{document}

\maketitle

\begin{abstract}
Consider the following classes of pairs consisting of a group and a finite collection of subgroups:
\begin{itemize}
\item \( \mathcal{C}= \left\{ (G,\mathcal H) \mid \text{$\mathcal{H}$ is hyperbolically embedded in $G$} \right\} \) 

\item \( \mathcal{D}= \left\{ (G,\mathcal H) \mid \text{the relative Dehn function of $(G,\mathcal H)$ is well-defined} \right\} .\) 
\end{itemize}
Let $G$ be a group that splits as a finite graph of groups such that each vertex group $G_v$ is assigned a finite collection of subgroups $\mathcal{H}_v$, and each edge group $G_e$ is conjugate to a subgroup of some $H\in \mathcal{H}_v$ if $e$ is adjacent to $v$.
 Then there is a finite collection of subgroups $\mathcal{H}$ of $G$ such that 
\begin{enumerate}
    \item If each $(G_v, \mathcal{H}_v)$ is in $\mathcal C$, then $(G,\mathcal{H})$ is in $\mathcal C$.
 
    \item If each $(G_v, \mathcal{H}_v)$ is in $\mathcal D$, then $(G,\mathcal{H})$ is in $\mathcal D$.
    
    \item For any vertex $v$ and for any $g\in G_v$, the element $g$ is conjugate to an element in some $Q\in\mathcal{H}_v$ if and only if $g$ is conjugate to an element in some $H\in\mathcal{H}$.
\end{enumerate}
That edge groups are not assumed to be finitely generated and that they do not necessarily belong to a peripheral collection of subgroups of an adjacent vertex are the main differences between this work and previous results in the literature.  The method of proof provides lower and upper bounds of the relative Dehn functions in terms of the relative Dehn functions of the vertex groups. These bounds  generalize and improve analogous results in the literature.
\end{abstract}


\section{Introduction}

Consider the following classes of pairs consisting of a group and a finite collection of subgroups:
\begin{itemize}
\item \( \mathcal{C}= \left\{ (G,\mathcal H) \mid \text{$\mathcal{H}$ is hyperbolically embedded in $G$} \right\} \) 

\item \( \mathcal{D}= \left\{ (G,\mathcal H) \mid \text{the relative Dehn function of $(G,\mathcal H)$ is well-defined} \right\} .\) \end{itemize}

\begin{theorem}\label{thmx:general}
Let $G$ be a group that splits as a finite graph of groups such that each vertex group $G_v$ is assigned a finite collection of subgroups $\mathcal{H}_v$, and each edge group $G_e$ is conjugate to a subgroup of some $H\in \mathcal{H}_v$ if $e$ is adjacent to $v$.
 Then there is a finite collection of subgroups $\mathcal{H}$ of $G$ such that 
\begin{enumerate}
    \item If each $(G_v, \mathcal{H}_v)$ is in $\mathcal C$, then $(G,\mathcal{H})$ is in $\mathcal C$.
 
    \item If each $(G_v, \mathcal{H}_v)$ is in $\mathcal D$, then $(G,\mathcal{H})$ is in $\mathcal D$.
    
    \item For any vertex $v$ and for any $g\in G_v$, the element $g$ is conjugate in $G_v$ to an element of some $Q\in\mathcal{H}_v$ if and only if $g$ is conjugate in $G$ to an element of some $H\in\mathcal{H}$.
\end{enumerate}
\end{theorem}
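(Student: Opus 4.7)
The plan is to reduce to the two basic Bass-Serre moves—amalgamated products $A *_C B$ and HNN extensions $A *_C$—by induction on the number of edges in the underlying graph of groups; choosing a spanning tree of the underlying graph and handling its edges one at a time as amalgamations, followed by the non-tree edges as HNN extensions, reduces the theorem to these two elementary cases. In each case I would take the peripheral structure on the new group to be $\mathcal{H} = \bigcup_v \mathcal{H}_v$, where each $H \in \mathcal{H}_v$ is viewed as a subgroup of $G$ via the canonical inclusion $G_v \hookrightarrow G$. A preliminary observation is that one may assume, after replacing each edge group by a conjugate and adjusting the edge inclusions by an inner automorphism, that each edge group $G_e$ is literally contained in some $H \in \mathcal{H}_v$ for each adjacent vertex $v$; this simplification does not change the isomorphism type of $G$ or of the pair $(G,\mathcal{H})$.

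For part (1), the strategy is to exploit a working characterization of hyperbolic embeddedness via hyperbolicity of the relative Cayley graph $\Gamma(G, S \sqcup \mathcal{H})$ together with properness of the associated coset-relative metric on each $H \in \mathcal{H}$. Starting from hyperbolic relative Cayley graphs $\Gamma_v = \Gamma(G_v, S_v \sqcup \mathcal{H}_v)$, I would build the relative Cayley graph of $(G, \mathcal{H})$ and exhibit it as a tree of spaces over the Bass-Serre tree of $G$: the vertex spaces are copies of the $\Gamma_v$ glued along edge groups that are already collapsed to a bounded set, since $G_e$ sits inside some peripheral subgroup, which is itself coned off. Hyperbolicity then follows from a standard Bestvina-Feighn style combination theorem for trees of hyperbolic spaces, and the required properness of the peripheral structure is inherited from the vertex data together with the fact that edge groups lie inside peripheral subgroups.

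For part (2), the plan is to assemble a relative presentation of $(G, \mathcal{H})$ from relative presentations of the $(G_v, \mathcal{H}_v)$, stable letters for the HNN moves, and explicit relators identifying each $G_e$ with its two images; these identifying relators can be expressed purely inside peripheral subgroups because $G_e$ lies in some $H \in \mathcal{H}_v$. Given any word $w$ representing the identity in $G$, I would take a van Kampen diagram $\Delta$ for $w$, decompose it into vertex chambers separated by edge arcs, bound the number of chambers linearly in $|w|$ by tracking a Bass-Serre style projection of $\partial\Delta$ to the underlying tree, and then bound the relative area of each chamber by the relative Dehn function of its vertex pair applied to the length of its boundary. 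Summation yields a well-defined recursive bound on the relative area of $w$. Part (3) is essentially bookkeeping: the forward implication is immediate from $\mathcal{H}_v \subseteq \mathcal{H}$, while the converse uses the Bass-Serre action, since a conjugate of $g \in G_v$ lying in some $H \in \mathcal{H}_{v'}$ fixes a vertex of the Bass-Serre tree, and after an inner conjugation one arranges the fixed vertex to be in the $G$-orbit of $v$.

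The main obstacle is precisely the weakened hypothesis on edge groups: they are not assumed finitely generated and are only conjugate into, rather than equal to, peripheral subgroups of the adjacent vertex groups. This breaks a direct appeal to Bestvina-Feighn or Dahmani combination theorems, which typically assume quasiconvex or peripheral edge groups that are finitely generated. The technical heart of the proof will therefore be the preliminary lemma outlined above—conjugating the edge inclusions so that each $G_e$ sits literally inside a peripheral subgroup—together with a careful verification that, once edge groups are coned off via the peripheral subgroups containing them, both the relative Cayley graph (for part (1)) and the relative presentation (for part (2)) behave well under the Bass-Serre decomposition, even though the edge groups themselves may be infinitely generated.
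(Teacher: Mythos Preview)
Your induction on edges via a spanning tree matches the paper's overall strategy, but there is a genuine gap in your choice of peripheral structure: taking $\mathcal{H} = \bigcup_v \mathcal{H}_v$ does not work. In an amalgamated product $G = G_1 *_C G_2$ with $C$ infinite and $C \leq K_1 \in \mathcal{H}_1$, $C \leq K_2 \in \mathcal{H}_2$, the subgroups $K_1$ and $K_2$ share the infinite subgroup $C$ inside $G$, so the collection $\{K_1, K_2, \ldots\}$ fails almost malnormality and hence cannot be hyperbolically embedded. (Concretely: take $G_1 = G_2 = \mathbb{Z}$ with $\mathcal{H}_i = \{G_i\}$ and $C = 2\mathbb{Z}$.) The same obstruction breaks the well-definedness of the relative Dehn function, since that also forces the pair to be proper. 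The paper fixes this by \emph{merging} the peripherals containing the edge group: for an amalgamation it replaces $K_1, K_2$ by the single subgroup $\langle K_1, K_2 \rangle \cong K_1 *_C K_2$, and for an HNN extension with $C \leq K$, $\varphi(C) \leq L$ it replaces $K, L$ by $\langle K^t, L \rangle$. Item (3) then needs its own short check that nothing is lost or gained up to conjugacy under this merging.

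Even with the corrected peripheral structure, your proposed techniques diverge from the paper's. Rather than a Bestvina--Feighn combination for part (1) or van Kampen diagrams for part (2), the paper characterizes both classes $\mathcal{C}$ and $\mathcal{D}$ via cocompact $G$-graphs that are \emph{fine} in Bowditch's sense at vertices with infinite stabilizer, and builds such a graph for $G$ as an explicit pushout (amalgamation case) or ``coalescence'' (HNN case) of the induced graphs $G \times_{G_i} \Gamma_i$, identifying the two peripheral vertices corresponding to $K_1$ and $K_2$ (respectively $K$ and $L$). The resulting graph has the $G$-orbit of this identified vertex as cut vertices, and the closures of the complementary components are copies of the original $\Gamma_i$; fineness and hyperbolicity then transfer piecewise. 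This sidesteps entirely the finite-generation issues you flag, which would indeed obstruct a direct Bestvina--Feighn or diagrammatic argument.
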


The theorem is trivial without the third item in the conclusion; indeed, the pair $(G, \{G\})$ belongs to both $\mathcal{C}$ and $\mathcal{D}$.  
In comparison with previous results in the literature, our main contribution is that our combination results do not assume that edge groups are finitely generated or contained in $\mathcal H_v$.

The notion of a hyperbolically embedded collection of subgroups was introduced by 
Dahmani, Guirardel and Osin~\cite{DGO17}. 
A pair $(G,\mathcal H)$ in $\mathcal{C}$ is called a \emph{hyperbolically embedded pair} and we write $\mathcal H\hookrightarrow_h G$. Our combination results for  hyperbolically embedded pairs $(G,\mathcal H)$ generalize  analogous results for relatively hyperbolic pairs in~\cite{Dah03,Ali05,Os06b, MjLa08, BW13} and  for hyperbolically embedded pairs~\cite{DGO17,MiOs15}.

The notions of finite relative presentation and  relative Dehn function $\Delta_{G, \mathcal H}$ of a group $G$ with respect to a collection of subgroups $\mathcal H$ were introduced by Osin~\cite{Osin06} generalizing the notions of finite presentation and Dehn function of a group.
A pair $(G,\mathcal H)$ is called \emph{finitely presented} if $G$ is finitely presented relative to $\mathcal H$, and $\Delta_{G,\mathcal H}$ is called the \emph{Dehn function of the pair} $(G,\mathcal H)$.
While a finitely presented group has a well-defined Dehn function, in contrast, the  Dehn function of a finitely presented pair $(G,\mathcal H)$ is not always well-defined, for a characterization see~\cite[Thm.E(2)]{HuMaSa21}. 
Our result generalizes combination results  for pairs $(G,\mathcal H)$ with well-defined  Dehn function by Osin~\cite[Thms. 1.2 and 1.3]{Os06b}. 

We prove Theorem~\ref{thmx:general} for the case of graphs of groups with a single edge, since then the general case follows directly by induction on the number of edges of the graph. This particular case  splits into three subcases corresponding to the three results stated below. The proofs of these subcases use characterizations of pairs $(G,\mathcal H)$ being hyperbolically embedded~\cite[Thm. 5.9]{MR21}, and having a well-defined Dehn function~\cite[Thm. 4.7]{HuMaSa21} in terms of existence of $G$-graphs with certain properties that relate to Bowditch's fineness~\cite{Bo12}. These characterizations  are discussed in Section~\ref{sec:02}. The proof of  Theorem~\ref{thmx:general} for the case of a graph of groups with a single edge entails the construction of graphs satisfying the conditions of those characterizations  for the fundamental group of the graph of groups. We use the existing graphs for the vertex groups as building blocks. 

Our method of proof provides lower and upper bounds for the relative Dehn function of the fundamental group of the graph of groups in the terms of the relative Dehn functions of the vertex groups, see Section~\ref{sec:addendum}. Specifically,
Theorem~\ref{cor:DehnFunctions} below  generalizes results of Brick~\cite{Br93} on bounds for the Dehn functions of free products (see the improvement by Guba and Sapir~\cite{GuSa99}) and improve the bounds found by Osin for relative Dehn functions in~\cite[Thms 1.2 and 1.3]{Osin06}.

Our main result reduces to the following statements. 

\begin{theorem}[Amalgamated Product]\label{thmx:Io}
 For $i\in\{
 1,2\}$, let $(G_i, \mathcal{H}_i\cup\{K_i\})$ be a pair and   $\partial_i\colon C\to K_i$  a group monomorphism. Let $G_1\ast_C G_2$  denote the amalgamated product determined by
$G_1\xleftarrow{\partial_1}C\xrightarrow{\partial_2}G_2$, and let  $\mathcal{H}=\mathcal{H}_1\cup\mathcal{H}_2$. Then:
\begin{enumerate}
    \item   \label{item:Ia} If $\mathcal{H}_i\cup\{K_i\} \hookrightarrow_h G_i$ for each $i$, then $\mathcal H\cup \{\langle K_1,K_2\rangle\}  \hookrightarrow_h G_1\ast_C G_2$.  
  
    \item \label{item:Ib}  If    $( G_i,\mathcal{H}_i\cup\{K_i\} ) \in \mathcal D $   for each $i$, then $(G_1\ast_C G_2, \mathcal{H}  \cup\{\langle K_1,K_2 \rangle\}) \in \mathcal D$.  
    
\item For any $g\in G_i$, the element $g$ is conjugate in $G_i$ to an element of some $Q\in\mathcal{H}_i\cup \{K_i\}$ if and only if $g$ is conjugate in $G$ to an element of some $H\in\mathcal{H}\cup \{ \langle K_1, K_2 \rangle\}$.    
\end{enumerate}
\end{theorem}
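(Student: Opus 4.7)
The plan is to apply the $G$-graph characterizations discussed in Section~\ref{sec:02}: by \cite[Thm.~5.9]{MR21}, membership in $\mathcal{C}$ is equivalent to the existence of a connected, fine, Gromov-hyperbolic $G$-graph with finite edge stabilizers, finitely many $G$-orbits of edges, and infinite-valence vertex stabilizers representing the peripheral collection; by \cite[Thm.~4.7]{HuMaSa21}, membership in $\mathcal{D}$ is similarly characterized by a connected fine $G$-graph with finitely many $G$-orbits of edges whose combinatorial isoperimetric function is well-defined. Items (1) and (2) therefore reduce to constructing, from $G_i$-graphs $\Gamma_i$ witnessing the hypotheses for $(G_i,\mathcal{H}_i\cup\{K_i\})$, a $G$-graph $\Gamma$ for $G=G_1\ast_CG_2$ witnessing the analogous conclusion for $\mathcal{H}\cup\{\langle K_1,K_2\rangle\}$.

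The construction I would use is a tree of graphs over the Bass--Serre tree $T$ of the splitting. Let $v_i\in\Gamma_i$ be the infinite-valence vertex stabilized by $K_i$. For each vertex $\tilde v\in T$ with stabilizer $gG_ig^{-1}$ place a translated copy $g\Gamma_i$; for each edge of $T$ joining such vertices, identify the corresponding translates of $v_1$ and $v_2$ (consistently with $\partial_1$ and $\partial_2$). The resulting $G$-graph $\Gamma$ inherits finite edge stabilizers and cofiniteness of edge orbits from the $\Gamma_i$. Its infinite-valence vertex stabilizers are either conjugates of elements of $\mathcal{H}_1\cup\mathcal{H}_2$ (coming from vertices of $\Gamma_i\setminus\{v_i\}$) or conjugates of $\langle K_1,K_2\rangle\cong K_1\ast_CK_2$, which is precisely the stabilizer of each gluing vertex.

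The key global observation is that any closed edge-path in $\Gamma$ projects to a closed path in $T$; since $T$ is a tree and distinct translates $g\Gamma_i$ meet only at gluing vertices, the path decomposes into pieces each lying in a single $g\Gamma_i$. From this I would extract fineness of $\Gamma$ from fineness of the $\Gamma_i$, hyperbolicity by a tree-of-hyperbolic-spaces argument in the spirit of Bestvina--Feighn (yielding (1)), and a combinatorial isoperimetric inequality for loops in $\Gamma$ by assembling van Kampen-style diagrams over $T$ (yielding (2)). The main obstacle I anticipate is verifying fineness at the gluing vertices, since $C$ and the $K_i$ are not assumed finitely generated: one must show that circuits cannot ``oscillate'' between different $g\Gamma_i$'s beyond what their projection to $T$ dictates, and that the finitely-many-orbits-of-edges property survives the identification.

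For item (3), I would invoke standard Bass--Serre normal-form arguments for amalgamated products. The direction from $G_i$-conjugacy to $G$-conjugacy is immediate since $\mathcal{H}_i\subset\mathcal{H}$ and $K_i\subset\langle K_1,K_2\rangle$. Conversely, if $g\in G_i$ is conjugate in $G$ to $h\in H$ for some $H\in\mathcal{H}\cup\{\langle K_1,K_2\rangle\}$, then analyzing the action of the conjugating element on $T$ together with the classical description of conjugacy in amalgamated products shows that either $g$ is $G_i$-conjugate to $h$ (when $h\in G_i$), or $g$ is $G_i$-conjugate into a conjugate of $C$, hence into $K_i$; in either case $g$ is $G_i$-conjugate to an element of some $Q\in\mathcal{H}_i\cup\{K_i\}$.
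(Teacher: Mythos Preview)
Your construction is exactly the paper's: the tree of graphs over the Bass--Serre tree is precisely the $C$-pushout of $G\times_{G_1}\Gamma_1$ and $G\times_{G_2}\Gamma_2$ that the paper builds in Section~\ref{sec:04} (Theorem~\ref{thm:CombinationFine} and Proposition~\ref{lem:pushout}). However, you make the remaining verifications considerably harder than they need to be, and your route for item~(2) diverges from the paper's.

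The key observation you miss is that the gluing vertex $z$ is a \emph{cut vertex} of $\Gamma$, and more generally every vertex in $G.z$ disconnects $\Gamma$, with the closure of each component of $\Gamma\setminus G.z$ being a single translate $g\Gamma_i$ (Proposition~\ref{lem:pushout}\eqref{cutpoint}). Once you have this, there is no ``oscillation'' to worry about: any embedded circuit through $z$ lies entirely in one piece $g\Gamma_i$, so fineness at $z$ is immediate from fineness of the $\Gamma_i$, and hyperbolicity follows from the elementary fact that a graph built from uniformly $\delta$-hyperbolic pieces glued along cut vertices is hyperbolic (Lemma~\ref{lem:FineAndHyp}). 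No Bestvina--Feighn machinery is needed. Note also that for the class $\mathcal{C}$ the paper's criterion (Theorem~\ref{thm:Farhan}) only asks for fineness at vertices with infinite stabilizer, not global fineness.

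For item~(2) the paper does not assemble van Kampen diagrams. Instead it uses the characterization of Theorem~\ref{thm:DehnFunctionFine}: $(G,\mathcal{H})\in\mathcal{D}$ if and only if the pair is finitely relatively presented \emph{and} admits a fine Cayley--Abels graph. The fine Cayley--Abels graph comes from the same pushout construction; the finite relative presentation is handled separately by Lemma~\ref{prop:AmalgamatedPresentation}, which shows that finite relative presentations of $(G_i,\mathcal{H}_i\cup\{K_i\})$ combine to one for $(G_1\ast_C G_2,\mathcal{H}\cup\{\langle K_1,K_2\rangle\})$. Your proposal omits this finite-presentation step, and your stated characterization of $\mathcal{D}$ (``fine graph whose combinatorial isoperimetric function is well-defined'') is not quite what Theorem~\ref{thm:DehnFunctionFine} says.
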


In the following statements, for a subgroup $K$ of a group $G$ and an element $g\in G$, the conjugate subgroup $gKg^{-1}$ is denoted by $K^g$.

\begin{thm}[HNN-extension I] \label{thmx:IIIo}
Let $(G, \mathcal{H} \cup\{K,L\})$ be a pair with $K\neq L$,  $C$  a  subgroup of $K$, and   $\varphi\colon C\to L$ a group monomorphism. Let $G\ast_{\varphi}$ denote the HNN-extension
$\langle G, t\mid  t c t^{-1} =\varphi(c)~\text{for all $c\in C$} \rangle$. Then:
 \begin{enumerate}
    \item \label{item:IIIa}  If $\mathcal{H} \cup\{K,L\}\hookrightarrow_h G$ then $\mathcal H \cup\{\langle K^t, L\rangle\} \hookrightarrow_h G\ast_{\varphi}$.
    \item \label{item:IIIb}  If $(G, \mathcal{H}\cup\{K,L\}) \in \mathcal{D}$, then $(G\ast_{\varphi},\mathcal H \cup\{\langle K^t, L\rangle\}) \in \mathcal D$.
    \item For any $g\in G$, the element $g$ is conjugate in $G$ to an element of some $Q\in\mathcal{H} \cup \{K, L\}$ if and only if $g$ is conjugate in $G\ast_\varphi$ to an element of some $H\in\mathcal{H}\cup \{ \langle K^t, L \rangle\}$. 
\end{enumerate}
\end{thm}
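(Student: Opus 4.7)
The plan is to deduce (1) and (2) from the graph-theoretic characterizations recalled in Section~\ref{sec:02}: a pair $(G,\mathcal H)$ lies in $\mathcal C$ (resp.~$\mathcal D$) precisely when there exists a connected $G$-graph $\Gamma$ with finitely many edge orbits whose non-trivial vertex stabilizers are exactly the $G$-conjugates of members of $\mathcal H$, and which is fine and hyperbolic (resp.~satisfies the analogous condition from~\cite{HuMaSa21}). Given such a $\Gamma$ for $(G,\mathcal H\cup\{K,L\})$ with distinguished vertices $x_K,x_L$ of stabilizers $K,L$, I would construct a $(G\ast_\varphi)$-graph $\hat\Gamma$ witnessing the desired conclusion for $(G\ast_\varphi,\mathcal H\cup\{\langle K^t,L\rangle\})$.

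First I would set up the Bass-Serre tree $T$ of $G\ast_\varphi$, with base vertex $v_0$ of stabilizer $G$ and base edge $e_0$ of stabilizer $\varphi(C)$ joining $v_0$ to $t\cdot v_0$. At each vertex $v\in T$ attach a copy $\Gamma_v$ of $\Gamma$, acted upon by $\mathrm{Stab}_{G\ast_\varphi}(v)$ via a chosen conjugator, so that $\{\Gamma_v\}$ is a $(G\ast_\varphi)$-equivariant family. Define $\hat\Gamma$ by identifying $x_L\in\Gamma_{v_0}$ with $t\cdot x_K\in\Gamma_{t\cdot v_0}$, and extending this identification $(G\ast_\varphi)$-equivariantly over the orbit of $e_0$. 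Consistency of the gluing holds because $\varphi(C)=\mathrm{Stab}(e_0)$ fixes both vertices to be identified: $\varphi(C)\leq L$ fixes $x_L$, and $\varphi(C)=tCt^{-1}\leq tKt^{-1}$ fixes $t\cdot x_K$. The stabilizer of the new identified vertex orbit is $\langle L,tKt^{-1}\rangle=\langle K^t,L\rangle$, while the remaining non-trivial vertex stabilizers of $\hat\Gamma$ are $(G\ast_\varphi)$-conjugates of members of $\mathcal H$.

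The main technical obstacle will be verifying that $\hat\Gamma$ inherits from $\Gamma$ the fineness and $\delta$-hyperbolicity required for (1) and the corresponding finiteness condition required for (2). Circuits passing through the newly glued vertex may traverse many copies of $\Gamma$ indexed by $T$, but the tree structure of $T$ forces any such circuit to decompose, at each visit to the glued-vertex orbit, into subcircuits each contained in a single $\Gamma_v$; the control on circuits provided by the hypothesis on $\Gamma$ should therefore transfer to $\hat\Gamma$ with explicit bounds. Finally, item (3) follows from Bass-Serre theory: an element $g\in G$ conjugate in $G\ast_\varphi$ to an element of $H\in\mathcal H$ acts elliptically on $T$ with $v_0$ in its fixed set, and the argument is routine; in the remaining case one uses that $\langle K^t,L\rangle\cong K^t\ast_{\varphi(C)}L$, so an elliptic element of this subgroup is conjugate inside $\langle K^t,L\rangle$ to an element of $K^t$ or of $L$, and absorbing a possible factor of $t$ returns $g$ to a $G$-conjugate of an element of $K$, $L$, or a member of $\mathcal H$ sitting in an edge stabilizer along the geodesic in $T$ from $v_0$ to the translate realizing the conjugation.
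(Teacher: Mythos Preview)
Your plan matches the paper's: the tree-of-copies construction $\hat\Gamma$ you describe is exactly the $\varphi$-coalescence of Section~\ref{sec:05}, and your decomposition-of-circuits argument is the cut-vertex property of Proposition~\ref{lem:HNN}\eqref{cutpoint-HNN} fed into Lemma~\ref{lem:FineAndHyp}. One detail your outline does not mention: for item~(2) the characterization in Theorem~\ref{thm:DehnFunctionFine} requires, in addition to a fine Cayley-Abels graph, that $(G\ast_\varphi,\mathcal H\cup\{\langle K^t,L\rangle\})$ be finitely relatively presented, which the paper verifies separately via Lemma~\ref{prop:RelPreHNN}.
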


Note that the third items of Theorems~\ref{thmx:Io}
and~\ref{thmx:IIIo}  follow directly from standard arguments in combinatorial group theory. This article focuses on proving the other statements. 

\begin{corollary}[HNN-extension II]\label{thmx:II}
Let $(G, \mathcal{H} \cup\{K\})$ be a pair,  $C$ a  subgroup of $K$, $s\in G$, and $\varphi\colon C\to K^s$ a group monomorphism. Let $G\ast_{\varphi}$ denote the HNN-extension
$\langle G, t\mid  t c t^{-1} =\varphi(c)~\text{for all $c\in C$} \rangle$. Then:
\begin{enumerate}
    \item   \label{item:IIab} 
     If $\mathcal{H} \cup\{K\}\hookrightarrow_h G$ then $\mathcal H \cup\{\langle K, s^{-1}t\rangle\} \hookrightarrow_h G\ast_{\varphi}$.
    \item \label{item:IIbb} 
     If $(G, \mathcal{H}\cup\{K\}) \in \mathcal{D}$, then $(G\ast_{\varphi},\mathcal H \cup\{\langle K, s^{-1}t\rangle\}) \in \mathcal D$.
     \item For any $g\in G$, the element $g$ is conjugate in $G$ to an element of some $Q\in\mathcal{H} \cup \{K\}$ if and only if $g$ is conjugate in $G\ast_\varphi$ to an element of some $H\in\mathcal{H}\cup \{ \langle K, s^{-1}t \rangle\}$.
\end{enumerate}
\end{corollary}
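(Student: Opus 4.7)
Item (3) follows from standard Bass--Serre theory and Britton's lemma applied to $G\ast_\varphi$. For items (1) and (2), the strategy is to deduce the corollary from Theorem~\ref{thmx:Io} (the amalgamated product case) via a two-step reduction: first eliminate the element $s$ by a change of stable letter, then recognise the resulting HNN-extension as an amalgamated product and invoke Theorem~\ref{thmx:Io}.

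\emph{Step 1 (reduction to $s=1$).} Define $\psi\colon C\to K$ by $\psi(c)=s^{-1}\varphi(c)s$. The assignment $g\mapsto g$ on $G$ together with $u\mapsto s^{-1}t$ extends to a well-defined isomorphism $\Phi\colon G\ast_\psi\to G\ast_\varphi$, with inverse determined by $t\mapsto su$; the verification reduces to the identity $(s^{-1}t)\,c\,(s^{-1}t)^{-1}=s^{-1}\varphi(c)s=\psi(c)$. Under $\Phi$, the subgroup $\langle K,u\rangle\leq G\ast_\psi$ corresponds to $\langle K,s^{-1}t\rangle\leq G\ast_\varphi$. Hence it suffices to prove items (1) and (2) in the case $s=1$, where $\psi\colon C\to K$ has both $C$ and $\psi(C)$ inside $K$.

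\emph{Step 2 (amalgam decomposition and application of Theorem~\ref{thmx:Io}).} Set $M=K\ast_\psi=\langle K,u\mid ucu^{-1}=\psi(c),\ \forall c\in C\rangle$. A direct comparison of presentations yields an isomorphism $G\ast_\psi\cong G\ast_K M$ (amalgamation of $G$ and $M$ over $K$), under which $\langle K,u\rangle\leq G\ast_\psi$ is identified with the factor $M\leq G\ast_K M$. I then apply Theorem~\ref{thmx:Io} to this amalgam with parameters $G_1=G$, $\mathcal{H}_1=\mathcal{H}$, $K_1=K$; $G_2=M$, $\mathcal{H}_2=\emptyset$, $K_2=M$; amalgamating subgroup $K$, with $\partial_1=\mathrm{id}_K$ and $\partial_2\colon K\hookrightarrow M$. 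The hypothesis $(G,\mathcal{H}\cup\{K\})\in\mathcal{C}$ (resp.\ $\mathcal{D}$) is the assumption of the corollary, while $(M,\{M\})\in\mathcal{C}\cap\mathcal{D}$ is the degenerate case in which the sole peripheral equals the whole group (with relative generating set $X=\emptyset$, the relative Cayley graph is complete, hence trivially hyperbolic, and the relative metric on $M$ is vacuously proper, being $\infty$ off $\{1\}$). Theorem~\ref{thmx:Io} then outputs $\mathcal{H}\cup\{\langle K_1,K_2\rangle\}=\mathcal{H}\cup\{M\}$ as a hyperbolically embedded (resp.\ Dehn-finite) peripheral family in $G\ast_K M$, which via $\Phi$ translates to $\mathcal{H}\cup\{\langle K,s^{-1}t\rangle\}$ in $G\ast_\varphi$, as claimed.

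\emph{Main point requiring care.} The algebra is essentially mechanical once Theorem~\ref{thmx:Io} is available; the one technical step worth highlighting is the identification $G\ast_\psi\cong G\ast_K M$, which uses Britton's lemma to ensure that the inclusion $K\hookrightarrow M=K\ast_\psi$ is injective, so that the amalgamation along $K$ is nondegenerate and $M$ embeds faithfully into $G\ast_K M$.
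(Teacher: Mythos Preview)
Your proof is correct and follows essentially the same approach as the paper: reduce to the case $s=1$ via the change of stable letter $u=s^{-1}t$ (so that $\psi=I_s\circ\varphi$ maps $C$ into $K$), then recognise $G\ast_\psi\cong G\ast_K(K\ast_\psi)$ and apply Theorem~\ref{thmx:Io} with the second factor carrying the trivial peripheral structure $(K\ast_\psi,\{K\ast_\psi\})\in\mathcal{C}\cap\mathcal{D}$. The only cosmetic difference is that the paper presents the two steps in the opposite order.
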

\begin{proof}
First we prove the statement in the case that $s$ is the identity element of $G$.  Let $L$ be the HNN-extension 
$L=K\ast_\varphi$. Observe that there is a natural isomorphism between  $G\ast_{\varphi}
$ and the amalgamated product $G\ast_{K} L$. In this case, the conclusion of the corollary is obtained directly by invoking Theorem~\ref{thmx:Io}, since the pair $(L, \{L\} )$ is in both classes $\mathcal C$ and $\mathcal D$. 

Now we argue in the case that $s\in G$ is arbitrary. Let $\psi\colon C \to K$ the composition $I_s\circ \varphi$ where $I_s$ is the inner automorphism $I_s(x)=s^{-1}xs$.
Since
\[G\ast_\varphi=\langle G, t \mid c^{s^{-1}t}=\varphi(c)^{s^{-1}} \text{ for all  $c\in C$} \rangle,
\]
there is a natural isomorphism $G\ast_\varphi \to G\ast_\psi$ which restricts to the identity on the base group $G$, and the stable letter of $G\ast_\psi$ corresponds to $s^{-1}t$ in $G\ast_\varphi$. Since $\psi$ maps $C\leq K$ into $K$, we have reduced the case of arbitrary $s\in G$ to the case that $s$ is the identity in $G$ and the statement of the corollary follows.
\end{proof}

Let us describe the argument proving our main result using the three previous statements. The argument  relies on the following observation.

\begin{remark}\label{rem:ConjugatingParabolics}
If a pair $(G,\mathcal{H}\cup \{L\})$ belongs to $\mathcal C$ (respectively $\mathcal D$) and $g\in G$  then $(G,\mathcal{H}\cup\{L^g\})$ belongs to $\mathcal C$ (respectively $\mathcal D$).   This statement can be seen directly from the original definitions of hyperbolically embedded collection of subgroups~\cite{DGO17}, and relative Dehn function~\cite{Osin06}. It can be also deduced directly from Theorems~\ref{thm:Farhan} and~\ref{thm:DehnFunctionFine} respectively in the main body of the article.     
\end{remark}

\begin{proof}[Proof of Theorem~\ref{thmx:general}]
The case of a tree of groups satisfying the hypothesis of the theorem follows from Theorem~\ref{thmx:Io} and Remark~\ref{rem:ConjugatingParabolics}. Then the general case reduces to the case of a graph of groups with a single vertex, where the vertex group corresponds to the fundamental group of a maximal tree of groups.  In the case of a graph of groups with a single vertex, each edge corresponds to applying either  Theorem~\ref{thmx:IIIo} or  Corollary~\ref{thmx:II} together with  Remark~\ref{rem:ConjugatingParabolics}.
\end{proof}

The following theorem generalizes results of Brick~\cite[Proposition 3.2]{Br93} on bounds on Dehn functions of free products, and improve bounds for relative Dehn functions found by Osin~\cite[Theorems 1.2 and 1.3]{Osin06}.

\begin{theorem}\label{cor:DehnFunctions}
\begin{enumerate}
\item Under the assumptions of Theorem~\ref{thmx:Io}\eqref{item:Ib}, if $\Delta$ is a relative Dehn function of  $(G_1\ast_C G_2, \mathcal{H}  \cup\{\langle K_1,K_2 \rangle\})$ and   $\Delta_i$ is a relative Dehn function of  $(G_i, \mathcal{H}_i  \cup\{K_i\})$ then 
\[ \max\{\Delta_1,\Delta_2\} \preceq \Delta  \preceq  \max\left\{\overline{\Delta_1} , \overline{\Delta_2} \right\},\] 
 where $\overline{\Delta_i}$ denotes the super-additive closure of $\Delta_i$.
 \item Under the assumptions of Theorem~\ref{thmx:IIIo}\eqref{item:IIIb}, if $\Delta$ is a relative Dehn function of  $(G\ast_\varphi, \mathcal{H}  \cup\{\langle K^t,L \rangle\})$ and   $\Delta_0$ is a relative Dehn function of  $(G , \mathcal{H}   \cup\{K, L\})$ then \[ \Delta_0 \preceq \Delta  \preceq  \overline{\Delta_0 } ,\] 
 where $\overline{\Delta_0}$ is the super-additive closure of $\Delta_0$
 \item Under the assumptions of Corollary~\ref{thmx:II}\eqref{item:IIbb}, if $\Delta$ is a relative Dehn function of  $(G\ast_{\varphi},\mathcal H \cup\{\langle K, s^{-1}t\rangle\})$ and   $\Delta_0$ is a relative Dehn function of  $(G, \mathcal{H}\cup\{K\})$ then 
\[ \Delta_0 \preceq \Delta  \preceq  \overline{\Delta_0} ,\] 
 where $\overline{\Delta_0}$ is the super-additive closure of $\Delta_0$
 \end{enumerate}
 \end{theorem}

We conclude the introduction with a more detailed comparison of our results with  previous results in the literature.
\begin{enumerate} 
\item Dahmani, Guirardel and Osin proved  Theorem~\ref{thmx:Io}\eqref{item:Ia} in the case that $\partial_1\colon C\to K_1$ is an isomorphism and $K_1$ is finitely generated~\cite[Thm 6.20]{DGO17}; and   Theorem~\ref{thmx:IIIo}\eqref{item:IIIa} in the case that $C=K$ and  $K$ is finitely generated~\cite[Thm 6.19]{DGO17}.  
\item Osin proved    Theorem~\ref{thmx:Io}\eqref{item:Ib} in the case that $\partial_1\colon C\to K_1$ is an isomorphism and $K_1$ is finitely generated, see~\cite[Thm 1.3]{Os06b}; and   Theorem~\ref{thmx:IIIo}\eqref{item:IIIb} in the case that $C=K$ and $K$ is finitely generated, see~\cite[Thm 1.2]{Os06b}.

\item Under the assumptions of Theorem~\ref{thmx:general}, if  each $(G_v, \mathcal{H}_v)\in \mathcal C$ for every vertex $v$, and there is at least one $v$ such that $\mathcal{H}_v$ is nontrivial in $G_v$, the existence of a nontrivial collection $\mathcal{H}$ such that $(G, \mathcal{H})\in \mathcal{C}$
follows from 
results of Minasyan and Osin~\cite[Cor. 2.2 and 2.3]{MiOs15} and the characterization of acylindrical hyperbolicity in terms of existence of proper infinite hyperbolically embedded subgroups by  Osin~\cite{OsinAcylindrical}; by a nontrivial collection we mean that it contains a proper infinite subgroup. This alternative approach does not guarantee that the collection $\mathcal H$   satisfies the third condition of Theorem~\ref{thmx:general}. 

\item Theorems~\ref{thmx:Io}\eqref{item:Ia} and ~\ref{thmx:IIIo}\eqref{item:IIIa}, in the case that $G_i$ is hyperbolic relative to $\mathcal{H}_i$ for $i=1,2$, follow from results of Wise and the first author~\cite[Thm. A]{BW13}. 
\end{enumerate}

\subsection*{Organization.} The rest of the article consists of five sections. In Section~\ref{sec:02} we review characterizations of pairs $(G, \mathcal H)$ being hyperbolically embedded and having well-defined Dehn functions in terms of actions on graphs.  In Section~\ref{sec:03}, we reduce the proof of Theorems~\ref{thmx:Io} and ~\ref{thmx:IIIo} to prove two technical results, Theorems~\ref{thm:CombinationFine} and~\ref{thm:HNNFine}. Their proofs are the content of Sections~\ref{sec:04} and~\ref{sec:05} respectively. The last section contains the proof of Theorem~\ref{cor:DehnFunctions}.

\subsection*{Acknowledgments.} We thank the referee for feedback and corrections. 
The authors also thank Sam Hughes for comments in a preliminary version of the article. The first  author acknowledges funding by the Fonds de Recherche du Québec–Nature et Technologies  FRQNT. 
 The second author acknowledges funding by the Natural Sciences and Engineering Research Council of Canada NSERC. 

\section{Characterizations using Fineness}\label{sec:02}

In this section,  we describe a  characterization  of pairs $(G,\mathcal H)$ being hyperbolically embedded, Theorem~\ref{thm:Farhan}; and a characterization of the pairs having a well-defined Dehn function, Theorem~\ref{thm:DehnFunctionFine}. These characterizations are  in terms of existence of $G$-graphs with certain properties that relate to Bowditch's fineness~\cite{Bo12}, a notion that is defined below. The characterizations are re-statements of  previous results in the literature~\cite[Thm. 5.9]{MR21} and \cite[Thm. 4.7]{HuMaSa21}. This section also includes a couple of lemmas that will be of use  in later sections. 

All graphs $\Gamma=(V,E)$ considered in this section are simplicial, so we consider the set of edges $E$ to be a  collection of subsets of cardinality two of the vertex set $V$.

Let $\Gamma$ be a simplicial graph, let $v$ be a vertex of $\Gamma$, and let $T_v\Gamma$ denote the set of the vertices adjacent to $v$. For $x, y \in T_v\Gamma$, the angle metric $\angle_v(x, y)$ is the combinatorial length of the shortest path in the graph $\Gamma-\{v\}$ between $x$ and $y$, with $\angle_v(x, y)=\infty$ if there is no such path. The graph $\Gamma$ is fine at $v$ if $(T_v\Gamma, \angle_v)$ is a locally finite metric space.   A graph is \emph{fine} if it is fine at every vertex. 
 
It is an observation that a graph $\Gamma$ is fine if and only if for every pair of vertices $x,y$ and every positive integer $n$, there are finitely many embedded paths between $x$ and $y$ of length at most $n$; for a proof see~\cite{Bo12}.

\subsection{Hyperbolically embedded pairs} 
In~\cite[Definition  2.9]{OsinAcylindrical}, Osin defines the notion of  a collection of subgroups $\mathcal{H}$ being hyperbolically embedded into a group $G$. This relation is denoted as $\mathcal{H} \hookrightarrow_h G$ and, in this case, we say that the pair $(G,\mathcal H)$ is a hyperbolically embedded pair. In this article we use the following characterization of hyperbolically embedded collection proved in~\cite{MR21} as our working definition.

\begin{definition}[Proper pair]
 A  pair $(G,\mathcal{H})$ is \emph{proper} if $\mathcal H$ is a finite collection of subgroups such that no two distinct infinite  subgroups are conjugate in $G$.
\end{definition}

\begin{theorem}[Criterion for hyperbolically embedded pairs]\label{thm:Farhan}\emph{\cite[Theorem 5.9]{MR21}} 
A proper pair $(G,\mathcal{H})$ is a hyperbolically embedded pair  if and only if 
there is a connected $G$-graph   $\Gamma$ such that
\begin{enumerate}
\item  There are finitely many $G$-orbits of vertices.
  \item Edge $G$-stabilizers are finite.
    
    \item Vertex $G$-stabilizers are either finite or conjugates of subgroups in $\mathcal{H}$.
    
    \item Every $H\in \mathcal{H}$ is the $G$-stabilizer of a vertex of $\Gamma$.
    
\item $\Gamma$ is hyperbolic.
\item $\Gamma$ is fine at $V_{\infty}(\Gamma)=\{v\in V(\Gamma)~|~v~\text{has infinite stabilizer}\}$.
\end{enumerate}
\end{theorem}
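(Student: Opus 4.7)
The plan is to establish the equivalence via the coned-off Cayley graph machinery from Osin's original definition of hyperbolic embedding, exhibiting a natural translation between the algebraic data of a hyperbolically embedded collection and the combinatorial/metric data of a $G$-graph of the stated form.

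For the forward implication, suppose $\mathcal{H} \hookrightarrow_h G$, witnessed by a finite relative generating set $X \subseteq G$. I would take as candidate the graph $\Gamma$ with vertex set $V(\Gamma) = G \sqcup \bigsqcup_{H \in \mathcal{H}} G/H$ and two types of edges: Cayley-type edges $\{g, gx\}$ for $g \in G$ and $x \in X$, together with cone-type edges $\{g, gH\}$ for $g \in G$ and $H \in \mathcal{H}$, acted upon by $G$ via left multiplication. Conditions (1)--(4) are immediate from the construction: there is one orbit of vertices for $G$ and one per coset space $G/H$; edges have trivial $G$-stabilizer; and each $H \in \mathcal{H}$ stabilizes the coset vertex $H$. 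Condition (5), hyperbolicity of $\Gamma$, is one of the defining requirements of hyperbolic embedding. Condition (6), fineness at the infinite-stabilizer vertices, is the graph-theoretic translation of the local finiteness of the relative metrics $\widehat{d}_H$ associated to a hyperbolically embedded collection; the correspondence between short embedded paths in $\Gamma$ avoiding a parabolic vertex and short words in $H$ with respect to $\widehat{d}_H$ carries one property into the other.

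For the backward implication, assume the existence of a $G$-graph $\Gamma$ satisfying (1)--(6). The idea is to extract a finite relative generating set for $G$ from the finitely many $G$-orbits of edges by choosing one representative per orbit: each Cayley-type edge orbit contributes a group element to $X$, while each orbit of edges joining two infinite-stabilizer vertices contributes, via its endpoints, a coset representative. With $X$ in hand, the coned-off Cayley graph $\widehat{\Gamma}(G, \mathcal{H}, X)$ is $G$-equivariantly quasi-isometric to $\Gamma$ in a manner that respects the parabolic vertices; this transfers hyperbolicity via (5). The fineness condition (6) then descends, by the same translation, to local finiteness of the relative metrics on the $H \in \mathcal{H}$, producing the data required by Osin's definition of hyperbolic embedding.

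The main obstacle I expect is the precise bookkeeping needed to verify that fineness at a parabolic vertex $v$ of $\Gamma$ with stabilizer $H$ is genuinely equivalent to local finiteness of the relative metric $\widehat{d}_H$, in both directions. The subtle point is that an embedded path in $\Gamma$ avoiding $v$ need not consist of Cayley-type edges only; it may pass through other parabolic vertices, and one must argue that each such path nevertheless encodes a word representing an element of $H$ of controlled $\widehat{d}_H$-length, while conversely showing that $\widehat{d}_H$-short elements of $H$ produce $v$-avoiding embedded paths of controlled length in $\Gamma$. Properness of the pair $(G, \mathcal{H})$ is essential here, since it prevents two distinct parabolic vertex orbits from interfering when one counts embedded paths. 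This bi-Lipschitz-type comparison between the angle metric at $v$ and the relative metric $\widehat{d}_H$ constitutes the technical heart of the equivalence, and it is exactly the content carried out in~\cite[Thm.~5.9]{MR21}.
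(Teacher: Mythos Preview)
The paper does not prove Theorem~\ref{thm:Farhan}; it is quoted from~\cite[Theorem~5.9]{MR21} and used as a black box. The only argument the paper supplies is the short reduction appearing immediately after the statement: \cite{MR21} handles the case where $\mathcal{H}$ consists of infinite subgroups, and the paper explains how to absorb finite subgroups via the equivalences $\mathcal{H}\hookrightarrow_h G \Leftrightarrow \mathcal{H}\cup\{K\}\hookrightarrow_h G$ and the existence of a $(G,\mathcal{H})$-graph $\Leftrightarrow$ existence of a $(G,\mathcal{H}\cup\{K\})$-graph, for $K$ finite. Your proposal instead sketches a direct proof, which is a genuinely different undertaking from what the paper does and is essentially an outline of the argument in~\cite{MR21} itself.

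That said, your backward implication has a real gap. You write that you will ``extract a finite relative generating set for $G$ from the finitely many $G$-orbits of edges,'' but condition~(1) only gives finitely many orbits of \emph{vertices}; nothing in the hypotheses bounds the number of edge orbits, and a general $\Gamma$ satisfying (1)--(6) need not be cocompact. Relatedly, your language of ``Cayley-type edges'' presupposes that $\Gamma$ already looks like a coned-off Cayley graph, which it need not. The actual work in the backward direction is to build, from an arbitrary such $\Gamma$, a (possibly infinite) set $X\subseteq G$ for which $\Gamma(G,X\sqcup\mathcal{H})$ is hyperbolic and the relative metrics $\widehat d_H$ are locally finite; this requires a more careful comparison than a quasi-isometry argument alone, and it is precisely here that fineness at $V_\infty(\Gamma)$ and properness of the pair are used. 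Your identification of the angle-metric/relative-metric comparison as the crux is correct, but the extraction of $X$ and the passage from $\Gamma$ to the Cayley graph need to be made honest before the argument goes through.
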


\begin{definition}
We refer to a graph $\Gamma$ satisfying the conditions of Theorem~\ref{thm:Farhan}  as a  \emph{$(G, \mathcal{H})$-graph}
\end{definition}

Let us observe that in~\cite{MR21}, Theorem~\ref{thm:Farhan} is proved for the case that $\mathcal H$ consists of a single infinite subgroup, and the authors observe that the argument in the case that $\mathcal H$ is a finite collection of infinite subgroups (such that no pair of distinct infinite subgroups in $\mathcal H$ are conjugate in $G$) follows by the same argument. Then the general case in which $\mathcal{H}$ is a finite collection of subgroups follows from the following statement: If $\mathcal H$ is a collection of subgroups and $K$ a finite subgroup of a group $G$ then:
\begin{enumerate}
    \item  
     $\mathcal{H}\hookrightarrow_h G$ if and only if  $\mathcal{H}\cup\{K\} \hookrightarrow_h G$.
     \item There is $(G,\mathcal H)$-graph if and only if  there is a $(G,\mathcal{H}\cup\{K\})$-graph.
\end{enumerate}
The first statement is a direct consequence of the definition of hyperbolically embedded collection by Osin~\cite{OsinAcylindrical}. The if part of the second statement is trivial, and the only if part follows directly from~\cite[Thm. 3.4]{ArMP22}. 

\subsection{Relative presentations  }

In~\cite[Chapter 2]{Osin06}, Osin introduces the notions of relative presentation of a group with respect to a collection of subgroups, and relative Dehn functions. We briefly recall these notions below.

Let $G$ be a group and let $\mathcal{H}$ be a collection of subgroups.
A subset $S$  of $G$ is a \emph{relative generating set} of $G$ with respect to $\mathcal H$ if the natural homomorphism
\begin{equation}\label{eq:rel:presentation}
    F(S,\mathcal{H})=F(S)\ast \bigast_{H\in \mathcal{H}}H\longrightarrow G
\end{equation}
is surjective, where $F(S)$ denotes the free group with free generating set $S$.
A relative generating set of $G$ with respect to $\mathcal H$ is called a \emph{generating set of the pair $(G, \mathcal H)$}. A pair that admits a finite generating set is called a \emph{finitely generated pair}.  Let $R\subseteq F(S,\mathcal{H})$ be a subset that normally generates the kernel of the above homomorphism. In this case, we have a short exact sequence of groups
\[ 1\to \nclose{R}\to F(S,\mathcal{H}) \to G \to 1,\]
and the triple
\begin{equation}\label{G:relative:presentation}
    \langle S,\mathcal{H}\ |\ R \rangle
\end{equation}
is called a \emph{relative presentation of $G$ with respect to $\mathcal H$}, or  just a \emph{presentation of the pair $(G,\mathcal H)$}. Abusing notation, we write $G=\langle S, \mathcal{H}\mid R \rangle$. If both $S$ and $R$ are finite we say that the pair $(G,\mathcal{H})$ is \emph{finitely presented}. 

\begin{lemma}\label{lem:Observation}
Let $G$ be a group and let    $\mathcal{H}_0\sqcup \mathcal{H}$ be a collection of subgroups. Let $P$ denote the subgroup of $G$ generated by $S_0$ and the subgroups in $\mathcal{H}_0$.
If 
\[
G= \langle S_0\sqcup S, \mathcal{H}_0\cup\mathcal{H} \mid R_0\sqcup R \rangle \quad\text{and}\quad P = \langle S_0,  \mathcal{H}_0 \mid R_0 \rangle
\]
then 
\[
G= \left\langle  S, \mathcal{H}\cup\left\{ P \right\} \mid R' \right\rangle,
\]
where $R'$ is the image of $R$ under the natural epimorphism $\varphi\colon F(S_0\cup S, \mathcal{H}_0\cup\mathcal{H}) \to F(S,\mathcal{H}\cup \{ P\})$
\end{lemma}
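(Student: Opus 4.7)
The plan is to construct a homomorphism $\psi\colon F(S,\mathcal H\cup\{P\})\to G$ that is a two-sided inverse to $\varphi$ modulo the appropriate normal closures, and thereby identify $G$ with $\tilde G := F(S,\mathcal H\cup\{P\})/\nclose{R'}$.

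First, since a free product is determined by its universal property, the map $\varphi$ in the statement is uniquely specified by its values on each free factor of the source: send each $s_0\in S_0$ and each element of $H\in\mathcal H_0$ to the corresponding element of the new $P$-factor of $F(S,\mathcal H\cup\{P\})$ (this is meaningful since $S_0$ and $\bigcup\mathcal H_0$ generate $P\le G$ by hypothesis), send each $s\in S$ to itself, and send each element of $H\in\mathcal H$ to itself in the corresponding factor. Two key properties follow. The restriction $\varphi|_{F(S_0,\mathcal H_0)}$ equals the composition of the presentation map $F(S_0,\mathcal H_0)\twoheadrightarrow P$ with the inclusion of $P$ as a free factor of $F(S,\mathcal H\cup\{P\})$, so in particular $\varphi(R_0)=\{1\}$. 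Moreover $\varphi$ is surjective, because its image contains $S$, each $H\in\mathcal H$, and all of $P$.

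Next, I define a companion homomorphism $\psi\colon F(S,\mathcal H\cup\{P\})\to G$ on free factors by $s\mapsto s$ for $s\in S$, $p\mapsto p$ for $p\in P\le G$, and $h\mapsto h$ for $h\in H\in\mathcal H$. Comparing on each free factor of the source of $\varphi$, the composition $\psi\circ\varphi$ coincides with the given presentation map $F(S_0\cup S,\mathcal H_0\cup\mathcal H)\twoheadrightarrow G$. In particular $\psi(R')=\psi(\varphi(R))$ is trivial in $G$, so $\psi$ descends to $\bar\psi\colon\tilde G\to G$. Dually, $\varphi$ sends $R_0\sqcup R$ into $\nclose{R'}$ (using $\varphi(R_0)=\{1\}$ and $\varphi(R)=R'$), so $\varphi$ descends to $\bar\varphi\colon G\to\tilde G$.

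Finally, $\bar\psi\circ\bar\varphi=\mathrm{id}_G$ is immediate from the fact that $\psi\circ\varphi$ is the original presentation map, and $\bar\varphi\circ\bar\psi=\mathrm{id}_{\tilde G}$ is verified by checking that each free-factor generator of $F(S,\mathcal H\cup\{P\})$ is fixed by the composition; for elements of the new $P$-factor this uses that $\varphi$ restricted to $F(S_0,\mathcal H_0)$ factors through the inclusion $P\hookrightarrow F(S,\mathcal H\cup\{P\})$. The main point needing care, rather than a real obstacle, is to distinguish the two roles of the symbols in $S_0$ and $\bigcup\mathcal H_0$ consistently: in the source of $\varphi$ they label separate free factors, whereas in the target they sit inside the single new free factor $P$.
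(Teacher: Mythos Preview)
Your proof is correct. Both you and the paper establish the isomorphism $G\cong F(S,\mathcal H\cup\{P\})/\nclose{R'}$, but by different devices. The paper sets $A=F(S,\mathcal H)$, $B=F(S_0,\mathcal H_0)$, $K=\nclose{R_0}_B$, $N=\nclose{R}_{A\ast B}$, and observes that $\varphi\colon A\ast B\to A\ast(B/K)$ has kernel exactly $\nclose{K}_{A\ast B}$; hence $\varphi$ descends to an isomorphism $(A\ast B)/\nclose{N,K}\cong (A\ast P)/\varphi(N)$, and $\varphi(N)=\nclose{R'}$ since $\varphi$ is surjective. In other words, the paper computes $\ker\varphi$ once and lets the third isomorphism theorem do the rest. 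You instead build the companion map $\psi\colon F(S,\mathcal H\cup\{P\})\to G$ explicitly and verify that $\bar\varphi$ and $\bar\psi$ are mutually inverse on the quotients. The paper's route is shorter and avoids the generator-by-generator check of $\bar\varphi\circ\bar\psi=\mathrm{id}$; your route is more self-contained and makes the role of the inclusion $P\hookrightarrow G$ completely explicit, which is helpful for the point you flag about the dual role of the symbols in $S_0$ and $\bigcup\mathcal H_0$.
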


\begin{proof}
Let  $A=F(S, \mathcal{H})$, $B=F(S_0, \mathcal{H}_0)$, $K$ the normal subgroup of $B$ generated by $R_0$ and $N$ the normal subgroup of $A\ast B=F(S_0\cup S , \mathcal{H}_0\cup \mathcal{H})$ generated by $R$. Our hypotheses imply that the natural epimorphisms $A\ast B \to G$ and $B\to P$ induce short exact sequences \[ 1 \to \nclose{N,K}\to   A\ast B \to G \to 1,\quad\text{and}\quad 1 \to K \to B \to P \to 1.\]
Let us identify $P=B/K$.
The natural epimorphism of the statement of the lemma \[\varphi\colon A\ast B \to A \ast (B/K)\] induces an isomorphism 
\[\hat\varphi\colon \frac{A\ast B}{\nclose{N,K}} \to \frac{A\ast (B/K)}{  \varphi(N)} = \frac{A\ast P}{\varphi(N)}.\]
By the definition of $N$, we have that  $\varphi(N)$ is the normal subgroup of $A\ast P$ generated by $R'=\varphi(R)$. Therefore the natural epimorphism $A\ast P \to G$ induces a short exact sequence
\[ 1 \to \nclose{R'}\to A\ast P \to G \to 1 \]
which concludes the proof. 
\end{proof}

The following pair of lemmas allow us to conclude that certain amalgamated products and HNN-extensions preserve  relative finite presentability.

\begin{lemma}[Amalgamated Products]\label{prop:AmalgamatedPresentation}
For $i\in\{
 1,2\}$, let $(G_i, \mathcal{H}_i\cup\{K_i\})$ be a pair,   $\partial_i\colon C\to K_i$  a group monomorphism. Let $G_1\ast_C G_2$  denote the amalgamated product determined by
$G_1\xleftarrow{\partial_1}C\xrightarrow{\partial_2}G_2$, and  $\mathcal{H}=\mathcal{H}_1\cup\mathcal{H}_2$. 
If \[G_i = \left\langle S_i, \mathcal{H}_i\cup\{K_i\}\mid R_i \right\rangle\] then \[G_1\ast_C G_2 = \left\langle S_1\cup S_2, \mathcal{H}\cup\{ \langle K_1,K_2 \rangle\}  \mid R_1\cup R_2 \right\rangle.\] 
\end{lemma}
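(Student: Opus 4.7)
The plan is to derive the claimed presentation from the natural relative presentation of $G_1\ast_C G_2$ by an application of Lemma~\ref{lem:Observation}. The key identification is that the subgroup $\langle K_1, K_2\rangle\leq G_1\ast_C G_2$ is itself an amalgamated free product of $K_1$ and $K_2$ over $C$.

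First I would record the ``obvious'' relative presentation
\[
G_1\ast_C G_2 \;=\; \bigl\langle\, S_1\cup S_2,\; \mathcal{H}_1\cup\mathcal{H}_2\cup\{K_1,K_2\} \;\big|\; R_1\cup R_2\cup R_C \,\bigr\rangle,
\]
where $R_C=\{\partial_1(c)\partial_2(c)^{-1} : c\in C\}$. This is obtained in two stages: the presentations of $G_1$ and $G_2$ together assemble into a relative presentation of the free product $G_1\ast G_2$ with relators $R_1\cup R_2$, after which imposing the relators $R_C$ yields $G_1\ast_C G_2$ by the universal property of the amalgamated product.

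Next I would identify the subgroup $P:=\langle K_1,K_2\rangle\leq G_1\ast_C G_2$. Since $\partial_i(C)\leq K_i$, the classical embedding theorem for subgroups of amalgams (the normal form theorem, or Bass--Serre theory applied to the tree of the splitting) gives that the natural homomorphism $K_1\ast_C K_2\to G_1\ast_C G_2$ is injective with image $P$. Hence $P\cong K_1\ast_C K_2$ admits the relative presentation
\[
P \;=\; \bigl\langle\, \emptyset,\; \{K_1,K_2\} \;\big|\; R_C \,\bigr\rangle.
\]

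Finally I would invoke Lemma~\ref{lem:Observation} with the substitutions $S_0=\emptyset$, $\mathcal{H}_0=\{K_1,K_2\}$, $R_0=R_C$, $S=S_1\cup S_2$, $\mathcal{H}=\mathcal{H}_1\cup\mathcal{H}_2$, and $R=R_1\cup R_2$. The relators in $R_1\cup R_2$ are words over the alphabet for $F(S,\mathcal{H})$ and involve no letters from $K_1$ or $K_2$, so the natural map $\varphi$ in Lemma~\ref{lem:Observation} fixes $R$ pointwise; consequently $R'=R_1\cup R_2$, producing the presentation claimed in the statement. The only nontrivial step is the identification $P\cong K_1\ast_C K_2$, and even that is a standard consequence of the structure theory of amalgamated products; the rest of the argument is bookkeeping around Lemma~\ref{lem:Observation}.
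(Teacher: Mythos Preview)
Your approach is essentially identical to the paper's: both record the natural relative presentation of $G_1\ast_C G_2$ with relators $R_1\cup R_2\cup R_C$, identify $\langle K_1,K_2\rangle\cong K_1\ast_C K_2$ with relative presentation $\langle\{K_1,K_2\}\mid R_C\rangle$, and then invoke Lemma~\ref{lem:Observation}.

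One small correction: your claim that the relators in $R_1\cup R_2$ ``involve no letters from $K_1$ or $K_2$'' is not true in general, since $R_i\subseteq F(S_i,\mathcal{H}_i\cup\{K_i\})$ and may very well contain $K_i$-letters. The map $\varphi$ of Lemma~\ref{lem:Observation} does not fix such relators pointwise; rather it sends each $K_i$-letter to the corresponding element of $P=\langle K_1,K_2\rangle$. The resulting set $R'=\varphi(R_1\cup R_2)$ is still naturally identified with $R_1\cup R_2$, now read as words in $F(S_1\cup S_2,\mathcal{H}\cup\{P\})$, which is precisely how the statement of the lemma intends them. So the conclusion stands; only your justification for why $R'=R_1\cup R_2$ needs rewording.
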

\begin{proof}
Observe that $\langle S_1\cup S_2, \mathcal{H}\cup\{K_1, K_2\} \mid R_1\cup R_2, \partial_1(c)=\partial_2(c) \text{ for all $c\in C$} \rangle$ is a relative presentation of $G_1\ast_C G_2$. Since the subgroup $\langle K_1, K_2 \rangle \leq G_1\ast_C G_2$ is isomorphic to the amalgamated product   $K_1\ast_C K_2$, we have that  $\langle  K_1,K_2  \mid \partial_1(c)=\partial_2(c) \text{ for all $c\in C$} \rangle$ is a relative presentation of $\langle K_1,K_2 \rangle$. The proof concludes by invoking Lemma~\ref{lem:Observation}.
\end{proof}

\begin{lemma}[HNN-extension]\label{prop:RelPreHNN}
Let $(G, \mathcal{H} \cup\{K,L\})$ be a pair with $K\neq L$,  $C$  a  subgroup of $K$,    $\varphi\colon C\to L$ a group monomorphism, and let  $G\ast_{\varphi}$ denote the HNN-extension
$\langle G, t\mid  t c t^{-1} =\varphi(c)~\text{for all $c\in C$} \rangle$.
If \[G=\langle S, \mathcal{H} \cup\{K,L\} \mid R \rangle\]
then 
\[G\ast_{\varphi}=\left\langle S,t, \mathcal{H}\cup\{ \langle K^t, L \rangle\} \mid R' \right \rangle, \]
where $R'$ is the set of relations obtained by taking each element of  $R$ and replacing all occurrences of elements $k\in K$ by  words $t^{-1} k^t t$. In particular, $R$ and $R'$ have the same cardinality.
\end{lemma}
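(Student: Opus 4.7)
The plan is to treat the lemma as a Tietze-style transformation between two relative presentations of $G\ast_\varphi$. Starting from the standard HNN-extension presentation
\[
G\ast_\varphi = \left\langle S\cup\{t\},\, \mathcal H\cup\{K,L\}\,\bigm|\, R\cup\{tct^{-1}\varphi(c)^{-1}:c\in C\}\right\rangle,
\]
which is immediate from the hypothesis $G=\langle S,\mathcal H\cup\{K,L\}\mid R\rangle$ together with the defining relations of the HNN-extension, the goal is to replace the two peripheral subgroups $K$ and $L$ by the single subgroup $P=\langle K^t,L\rangle\leq G\ast_\varphi$. The guiding observation is that in $G\ast_\varphi$ every $k\in K$ satisfies $k=t^{-1}k^t t$, so any occurrence of a letter from $K$ can be rewritten as a length-three word whose middle letter lies in $P$, while elements of $L$ already sit inside $P$; this is precisely the substitution rule defining $R'$.

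I would then introduce the two free products
\[
A = F\left(S\cup\{t\},\, \mathcal H\cup\{K,L\}\right),\qquad B = F\left(S\cup\{t\},\, \mathcal H\cup\{P\}\right),
\]
together with two homomorphisms: a substitution map $\pi\colon A\to B$ that is the identity on the free factors $S$, $t$, $\mathcal H$, and $L$, and that sends each $k\in K$ to $t^{-1}k^t t\in B$ (with $k^t$ read as an element of the free factor $P\supseteq K^t$); and the natural map $\rho\colon B\to G\ast_\varphi$ induced by the inclusions $\mathcal H,\, P\hookrightarrow G\ast_\varphi$. The argument then consists of three verifications: $(i)$ $\pi|_K$ is a homomorphism, because inside $B$ the cancellation $tt^{-1}=e$ yields $(t^{-1}k_1^t t)(t^{-1}k_2^t t) = t^{-1}(k_1k_2)^t t$; $(ii)$ $\pi(R)=R'$ by construction, and $\pi$ annihilates each HNN-relator because $\pi(tct^{-1}\varphi(c)^{-1}) = c^t\varphi(c)^{-1}$, which is already trivial inside the free factor $P$ since $c^t=\varphi(c)$ holds in $P\leq G\ast_\varphi$; $(iii)$ $\rho$ kills $R'$, because substituting $t^{-1}k^t t$ back for $k$ inside $G\ast_\varphi$ recovers $k$, so $\rho(r')$ equals the image of the corresponding $r\in R$, which is trivial in $G$.

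These three checks ensure that $\pi$ and $\rho$ descend to homomorphisms $\bar\pi\colon G\ast_\varphi\to B/\nclose{R'}$ and $\bar\rho\colon B/\nclose{R'}\to G\ast_\varphi$; a final verification on generators shows that $\bar\pi\circ\bar\rho$ and $\bar\rho\circ\bar\pi$ are the respective identities, giving the desired isomorphism, and the cardinality statement $|R|=|R'|$ is built into the word-by-word substitution defining $\pi$ on $R$. The step requiring the most care, and the main obstacle to a clean write-up, is the double bookkeeping around the symbol $k^t$: inside $A$ it is the length-three word $tkt^{-1}$ over the generators $\{t\}\cup K$, while inside the free factor $P$ of $B$ it must be parsed as a single element of $K^t\leq P$. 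The content of the lemma is exactly that these two interpretations are compatible modulo $\nclose{R'}$ on one side and the HNN-relators on the other, and this compatibility is delivered by the identity $c^t=\varphi(c)$ already holding inside $P\leq G\ast_\varphi$.
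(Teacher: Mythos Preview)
Your approach is correct. The paper packages the argument differently: rather than building the isomorphism $A/\nclose{R\cup\text{HNN}}\cong B/\nclose{R'}$ directly via your pair $(\pi,\rho)$, it proceeds in two stages. First it performs exactly your substitution $k\mapsto t^{-1}k^t t$ to trade the free factor $K$ for $J=K^t$, arriving at an intermediate presentation $\langle S,t,\mathcal H\cup\{J,L\}\mid R',\ \psi(c)=\varphi(c)\ (c\in C)\rangle$. Then it invokes Britton's lemma to identify $\langle J,L\rangle\leq G\ast_\varphi$ with the amalgam $J\ast_{\varphi(C)}L$, so that $\langle\{J,L\}\mid\psi(c)=\varphi(c)\rangle$ is a relative presentation of that subgroup, and finally applies the general Lemma~\ref{lem:Observation} to absorb $J$ and $L$ into the single peripheral $P=\langle J,L\rangle$ while discarding the relators $\psi(c)=\varphi(c)$. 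Your observation that $\pi$ already kills each HNN-relator because $c^t=\varphi(c)$ holds inside the free factor $P$ of $B$ is precisely what replaces the combination of Britton's lemma and Lemma~\ref{lem:Observation}. The paper's route is more modular---the same Lemma~\ref{lem:Observation} also dispatches the amalgamated-product companion statement---while yours is more self-contained and avoids any structural statement about $P$ beyond the single identity $c^t=\varphi(c)$.
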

\begin{proof} 
Let $J$ denote the subgroup $K^t$, and let $\psi\colon K \to J$ be the isomorphism $\psi(k)=tkt^{-1}$. Observe that $\langle S, t, \mathcal{H}\cup\{K,L\} \mid R,\   tct^{-1}=\varphi(c) \text{ for all $c\in C$} \rangle$ is a  presentation for the pair $(G\ast_\varphi, \mathcal{H}\cup\{K,L\})$.  Therefore   
\[ G\ast_\varphi = \langle S, t, \mathcal{H}\cup\{J,L\} \mid R',\   \psi(c)=\varphi(c) \text{ for all $c\in C$} \rangle.\]  A consequence of Britton's lemma is that the subgroup $\langle J, L \rangle \leq G\ast_\varphi$ is isomorphic to the amalgamated product $J\ast_{\varphi(C)}L$. Hence,  
\[  \langle J, L\rangle  =  \langle \{J,L\} \mid   \psi(c)=\varphi(c) \text{ for all $c\in C$} \rangle.\] 
The proof concludes by invoking Lemma~\ref{lem:Observation}.
\end{proof}

\subsection{Relative Dehn Functions}

Suppose that $\langle S,\mathcal{H}\mid R\rangle$ is a finite relative presentation of the pair $(G,\mathcal H)$.  For a word $W$ over  the alphabet $\mathcal{S}=S\sqcup \bigsqcup_{H\in\mathcal{H}}(H-\{1\})$ representing the trivial element in $G$, there is an expression
\begin{equation}\label{eq:relation}W=\prod_{i=1}^k f_i^{-1}R_i f_i\end{equation}
where $R_i\in R$ and $f_i\in F(S)$. We say a function $f\colon \mathbb{N}\to \mathbb{N}$ is a \emph{relative isoperimetric function} of the relative presentation $\langle S,\mathcal{H}\ |\ R \rangle$ if, for any $n\in \mathbb{N}$, and any word $W$ over  the alphabet $\mathcal{S}$ of length $\leq n$  representing the trivial element in $G$, one can write $W$ as in \eqref{eq:relation} with $k\leq f(n)$.
The smallest relative isoperimetric function of a finite relative presentation $\langle S,\mathcal{H}\ |\ R \rangle$ is called the \emph{relative Dehn function of $G$ with respect to $\mathcal{H}$}, or the \emph{Dehn function of the pair $(G, \mathcal H)$}. This function is denoted by  $\Delta_{G,\mathcal{H}}$.   Theorem~\ref{Thm:Osin:well:defined} below justifies the notation $\Delta_{G,\mathcal{H}}$ for the   Dehn function of a finitely presented pair $(G,\mathcal{H})$.

For functions $f,g\colon \mathbb{N}\to \mathbb{N}$, we write $f\preceq g$ if there exist constants $C,K,L\in \mathbb{N}$ such that
$f(n)\leq Cg(Kn)+Ln$ for every $n$. We say $f$ and $g$ are \emph{asymptotically equivalent}, denoted as $f \asymp g$, if $f\preceq g$ and $g\preceq f$.

\begin{theorem} \emph{\cite[Theorem 2.34]{Osin06}} \label{Thm:Osin:well:defined} 
Let $G$ be a finitely presented group relative to the collection of subgroups $\mathcal H$. Let $\Delta_1$ and $\Delta_2$ be the relative Dehn functions associated to two finite relative presentations. If $\Delta_1$ takes only finite values, then $\Delta_2$ takes only finite values, and $\Delta_1\asymp \Delta_2$.
\end{theorem}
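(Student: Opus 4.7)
The plan is to mimic the classical Tietze-rewriting proof that absolute Dehn functions of finite presentations are well-defined up to $\asymp$, adapted to handle the fact that the relative alphabet $\mathcal{S}=S\sqcup\bigsqcup_{H\in\mathcal{H}}(H-\{1\})$ is infinite. Fix two finite relative presentations $\mathcal{P}_i=\langle S_i,\mathcal{H}\mid R_i\rangle$ of $(G,\mathcal{H})$ with relative Dehn functions $\Delta_i$. For each $s\in S_1$ pick a word $w_s$ over the alphabet of $\mathcal{P}_2$ representing the same element of $G$ as $s$, and symmetrically words $w'_t$ for $t\in S_2$. Finiteness of $S_1$ and $S_2$ means these words have uniformly bounded length, so the induced substitution homomorphisms $\sigma\colon F(S_1,\mathcal{H})\to F(S_2,\mathcal{H})$ and $\tau$ in the opposite direction act as the identity on each $H\in\mathcal{H}$, commute with the projections to $G$, and inflate word length by fixed multiplicative constants $C$ and $C'$.

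Several bookkeeping constants are then fixed once and for all. For each $r\in R_1$, the word $\sigma(r)$ is trivial in $G$ and lies in the alphabet of $\mathcal{P}_2$, so by the definition of relative presentation it admits an expression as in~\eqref{eq:relation} using some finite number $A_r$ of conjugates of elements of $R_2$; since $R_1$ is finite, $A=\max_r A_r$ is a fixed constant, and a symmetric constant $A'$ is produced going the other way. Separately, for each $s\in S_1$ the word $s^{-1}\tau(\sigma(s))$ is trivial in $G$ in the alphabet of $\mathcal{P}_1$, hence has a finite area $B_s$ in $\mathcal{P}_1$; set $B=\max_s B_s$ and produce $B'$ symmetrically. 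These constants are finite for any relative presentation, without using the hypothesis on $\Delta_1$.

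The hypothesis that $\Delta_1$ takes only finite values is invoked exactly once. Given a word $W$ of length $\leq n$ over the alphabet of $\mathcal{P}_2$ representing $1$ in $G$, set $W'=\tau(W)$, a word of length $\leq C'n$ over the alphabet of $\mathcal{P}_1$ still representing $1$. The hypothesis yields an expression of $W'$ using at most $\Delta_1(C'n)$ conjugates of elements of $R_1$. Apply $\sigma$ and replace each $\sigma(r)$ by its prescribed $\mathcal{P}_2$-decomposition to express $\sigma(\tau(W))$ as a product of at most $A\cdot\Delta_1(C'n)$ conjugates of elements of $R_2$. A letter-by-letter correction of $\sigma(\tau(W))$ back to $W$ via the words $s^{-1}\tau(\sigma(s))$ contributes at most $B'n$ further conjugates, yielding $\Delta_2(n)\leq A\cdot\Delta_1(C'n)+B'n$. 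In particular $\Delta_2$ is finite-valued and $\Delta_2\preceq\Delta_1$; running the same argument with the roles of the two presentations swapped — now legitimate because $\Delta_2$ has just been shown to take finite values — gives $\Delta_1\preceq\Delta_2$.

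The main obstacle will be the letter-by-letter correction step, together with the subtlety that in~\eqref{eq:relation} the conjugators $f_i$ are required to belong to $F(S)$ rather than to $F(S,\mathcal{H})$. The substitution $\sigma$ a priori sends an element of $F(S_1)$ into the larger free product $F(S_2,\mathcal{H})$, so $\sigma(f_i)$ can contain letters from $\bigsqcup_H(H-\{1\})$ and must be re-normalized into a conjugator lying in $F(S_2)$; this requires absorbing the $H$-syllables of $\sigma(f_i)$ into adjacent $H$-syllables of the word being decomposed. Carrying out this absorption without disturbing the asymptotics of the bound is where the combinatorics needs to be handled carefully, but once done the remainder of the argument is routine.
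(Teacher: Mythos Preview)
The paper does not prove this theorem: it is imported verbatim from Osin's monograph and used as a black box, so there is no proof in the paper to compare your attempt against.

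Your sketch is essentially the standard Tietze-rewriting argument that Osin gives. The main obstacle you flag at the end is a phantom arising from a typo in the present paper: in equation~\eqref{eq:relation} the conjugators $f_i$ should range over $F(S,\mathcal{H})$, not $F(S)$ --- otherwise products of the displayed form would not even exhaust the normal closure of $R$. With $f_i\in F(S,\mathcal{H})$ the re-normalization problem you worry about disappears: $\sigma$ carries $F(S_1,\mathcal{H})$ into $F(S_2,\mathcal{H})$, so $\sigma(f_i)$ is already a legitimate conjugator and the decomposition transports directly. The remaining correction step (showing $W^{-1}\sigma(\tau(W))$ has $\mathcal{P}_2$-area at most $B'n$) is then a routine telescoping, since $\sigma\circ\tau$ fixes every $H$-letter and only the $S_2$-letters of $W$ need adjusting, each at bounded cost. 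One small slip: the words used in that correction are $t^{-1}\sigma(\tau(t))$ for $t\in S_2$, not $s^{-1}\tau(\sigma(s))$ as you wrote, though the constant $B'$ you invoke is the right one.
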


The   Dehn function of a pair $(G,\mathcal H)$ is \emph{well-defined} if it takes only finite values. This can be characterized in terms of fine graphs as follows.  

\begin{definition}[Cayley-Abels graph for  pairs]
A \emph{Cayley-Abels graph} of the pair $(G,\mathcal{H})$ is a connected cocompact simplicial $G$-graph $\Gamma$ such that: 
\begin{enumerate}
    \item edge $G$-stabilizers are finite,
    
    \item vertex $G$-stabilizers are either finite or conjugates of subgroups in $\mathcal{H}$,
    
    \item every $H\in \mathcal{H}$ is the $G$-stabilizer of a vertex of $\Gamma$, and
    
    \item any pair of vertices of $\Gamma$ with the same $G$-stabilizer $H\in\mathcal H$ are in the same $G$-orbit if $H$ is infinite.
\end{enumerate}
 \end{definition}

 \begin{theorem}\label{thm:DehnFunctionFine}
Let  $(G,\mathcal{H})$ be a proper pair. The following statements are equivalent.
\begin{enumerate}
    \item The   Dehn function $\Delta_{G,\mathcal{H}}$ is well-defined.
    \item $(G,\mathcal H)$ is finitely  presented and there is a fine Cayley-Abels graph of $(G,\mathcal H)$.
    \item $(G,\mathcal H)$ is finitely  presented and every  Cayley-Abels graph of $(G,\mathcal H)$ is fine.
\end{enumerate}
\end{theorem}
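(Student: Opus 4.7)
The plan is to combine \cite[Thm.~4.7]{HuMaSa21} with a transfer-of-fineness lemma. That theorem directly provides the equivalence between well-definedness of $\Delta_{G,\mathcal H}$ and the existence of a finitely presented pair structure together with a fine Cayley-Abels graph. The remaining content of Theorem~\ref{thm:DehnFunctionFine} is the assertion that fineness is intrinsic to the pair and not to the particular Cayley-Abels graph chosen, i.e.\ that (2) and (3) are equivalent.

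First I would address the easy implication (3) $\Rightarrow$ (2): it reduces to exhibiting a Cayley-Abels graph for every finitely presented pair, which one constructs directly from a finite relative presentation by forming the Cayley graph of $G$ relative to the finite generating set and coning off each orbit of cosets of the infinite peripheral subgroups, with minor bookkeeping to guarantee axiom (4) of the definition. The implications (1) $\Rightarrow$ (2) and (2) $\Rightarrow$ (1) then follow by translating \cite[Thm.~4.7]{HuMaSa21}: a fine Cayley-Abels graph encodes enough information to build relative van Kampen diagrams whose area is controlled linearly by the perimeter, forcing $\Delta_{G,\mathcal H}$ to take only finite values, and conversely well-definedness of $\Delta_{G,\mathcal H}$ yields fineness of the constructed Cayley-Abels graph.

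The crucial step, which closes the cycle by giving (2) $\Rightarrow$ (3), is a \emph{fineness transfer lemma}: if $\Gamma_1$ and $\Gamma_2$ are Cayley-Abels graphs of the same pair and $\Gamma_1$ is fine, then $\Gamma_2$ is fine. To prove this I would construct a $G$-equivariant combinatorial map $\phi \colon \Gamma_2 \to \Gamma_1$ that sends each edge of $\Gamma_2$ to a path of uniformly bounded length in $\Gamma_1$. Such a $\phi$ exists because $\Gamma_2$ has finitely many $G$-orbits of edges and $\Gamma_1$ is connected with a matching system of infinite vertex stabilizers. Any embedded loop of length at most $n$ through a vertex $v$ in $\Gamma_2$ maps to a loop of length at most $Cn$ through $\phi(v)$ in $\Gamma_1$, and the multiplicity of this pullback is controlled by the finite edge stabilizers of $\Gamma_2$. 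Fineness of $\Gamma_1$ then bounds the count of embedded loops of bounded length through $v$ in $\Gamma_2$.

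The main obstacle is precisely this transfer argument. Two Cayley-Abels graphs may differ substantially in their edge structure and in their finite-stabilizer vertices, so one must argue that fineness is genuinely detected only at the infinite-stabilizer vertices (local finiteness at finite-stabilizer vertices being automatic from cocompactness together with finiteness of edge stabilizers), and one must bound the multiplicity of the path-pullback carefully using finiteness of edge stabilizers and cocompactness. With this lemma in hand, the cycle (3) $\Rightarrow$ (2) $\Rightarrow$ (1) $\Rightarrow$ (2) $\Rightarrow$ (3) closes and the three conditions are seen to be equivalent.
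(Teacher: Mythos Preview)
Your proposal is correct and follows essentially the same route as the paper: the equivalence $(1)\Leftrightarrow(2)$ is obtained from \cite[Thm.~4.7 / Thm.~E]{HuMaSa21} via the coned-off Cayley graph, and the step $(2)\Rightarrow(3)$ is a fineness-transfer statement between Cayley-Abels graphs. The paper does not prove the transfer lemma itself but cites it as \cite[Thm.~H]{ArMP22}; your sketch of that lemma (equivariant combinatorial map sending edges to bounded paths, multiplicity controlled by finite edge stabilizers) is exactly the kind of argument used there.
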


 Theorem~\ref{thm:DehnFunctionFine} is essentially~\cite[Theorem E]{HuMaSa21} together with a  result on  Cayley-Abels graphs from ~\cite[Theorem H]{ArMP22}. This is described below. 
 
Concrete examples of Cayley-Abels graphs can be exhibited using the following construction introduced by Farb~\cite{Farb}, see also~\cite{Hruska}.

\begin{definition}[Coned-off Cayley graph]
Let $(G,\mathcal{H})$ be a pair, and let $S$ be a finite relative generating set of $G$ with respect to $\mathcal{H}$. Denote by $G/\mathcal{H}$ the set of all cosets $gH$ with $g\in G$ and $P\in \mathcal H$. The \emph{coned-off Cayley graph $\hat\Gamma(G,\mathcal{H},S)$} is the graph  with vertex set $G\cup G/\mathcal H$ and edges of the following type
\begin{itemize}
    \item $\{g,gs\}$ for $s\in S$ and $g\in G$,
    \item $\{x, gH\}$ for $g\in G$, $H\in \mathcal{H}$ and $x\in gH$.
\end{itemize}
\end{definition}

That a pair $(G,\mathcal H)$ has a well-defined function is characterized in terms of fineness of coned-off Cayley graphs.

\begin{theorem}\emph{\cite[Theorem E]{HuMaSa21}} \label{thm:SLE-ThmE0}
Let  $(G,\mathcal{H})$ be a finitely presented pair with a finite generating set $S$.  The Dehn function $\Delta_{G,\mathcal{H}}$ is well-defined if and only if the coned-off Cayley graph  $\hat\Gamma(G,\mathcal{H},S)$ is fine. \end{theorem}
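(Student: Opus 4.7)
The plan is to establish the equivalence by translating between relative van Kampen diagrams and closed paths in $\hat\Gamma(G,\mathcal{H},S)$. A word $W$ of length $n$ over the alphabet $\mathcal S$ representing the identity traces a closed loop $\ell_W$ in $\hat\Gamma$: each letter in $S$ crosses a Cayley edge, while each letter in some $H\in\mathcal H$ corresponds to a two-edge excursion through the cone vertex $gH$. A relative van Kampen diagram for $W$ provides a combinatorial filling whose relator $2$-cells count the relative area, and whose peripheral faces correspond precisely to the excursions through cone vertices in $\hat\Gamma$.

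For the implication $(\Rightarrow)$, I would argue the contrapositive. If $\hat\Gamma$ is not fine, there exist vertices $u,v$ and an integer $N$ with infinitely many embedded paths $\gamma_1,\gamma_2,\dots$ of length at most $N$ joining $u$ to $v$. After translating by an element of $G$ we may assume $u=1$. The concatenations $\gamma_i\cdot\overline{\gamma_j}$ produce an infinite family of pairwise distinct closed loops of length at most $2N$ based at $1$, whose labels $W_{ij}\in F(S,\mathcal H)$ have uniformly bounded length and represent the identity in $G$. A counting argument using the finiteness of $S$ and $R$ shows that only finitely many of these labels can admit a relative van Kampen diagram of any fixed bounded area, since there are only finitely many isomorphism types of relative diagrams with bounded boundary length and bounded area. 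Therefore the relative areas of the $W_{ij}$ must be unbounded, contradicting well-definedness of $\Delta_{G,\mathcal H}$.

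For the implication $(\Leftarrow)$, assume $\hat\Gamma$ is fine. Since the $G$-action on $\hat\Gamma$ has finitely many orbits of vertices, fineness together with cocompactness implies that for every $k$ there are only finitely many $G$-orbits of embedded circuits of length at most $k$. Using finite relative presentability, fix for each orbit representative a relative van Kampen filling, and let $\Phi(k)$ denote the maximum relative area among these fillings. Given any word $W$ of length $n$ representing the identity, decompose the loop $\ell_W$ into embedded subcircuits by a standard surgery argument, each of length bounded in terms of $n$. Fill each subcircuit by the chosen template and assemble the fillings into a relative van Kampen diagram for $W$ whose total relative area is bounded by a function of $n$. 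Hence $\Delta_{G,\mathcal H}$ takes only finite values.

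The main obstacle is the decomposition-and-assembly step in the $(\Leftarrow)$ direction: breaking an arbitrary loop $\ell_W$ into embedded subcircuits and patching their fillings together requires careful combinatorial bookkeeping to ensure that peripheral excursions in different subcircuits remain compatible and that the total number of relator cells is controlled linearly by $n$. The counting argument in the $(\Rightarrow)$ direction is more direct but still relies essentially on the finiteness of $R$ to bound the number of isomorphism types of relative diagrams with prescribed boundary length and area.
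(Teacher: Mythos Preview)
The paper does not prove this theorem; it quotes it verbatim from \cite[Theorem~E]{HuMaSa21} and uses it as a black box to deduce Theorem~\ref{thm:DehnFunctionFine}. So there is no proof in the paper to compare against.

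That said, your sketch has a genuine gap in the $(\Rightarrow)$ direction. The counting step ``there are only finitely many isomorphism types of relative diagrams with bounded boundary length and bounded area'' imports an argument that works for ordinary finite presentations but fails here: the alphabet $\mathcal S = S \sqcup \bigsqcup_{H\in\mathcal H}(H\setminus\{1\})$ is infinite whenever some $H$ is infinite, and relative van~Kampen diagrams contain $H$-cells whose boundary labels range over infinitely many words in $H$ while contributing nothing to the relative area. Hence there are infinitely many labelled relative diagrams with boundary length $\le 2N$ and relative area $\le k$, and you cannot conclude that the areas of the $W_{ij}$ are unbounded. To repair this direction one has to exploit something more specific than raw counting --- for instance, that failure of fineness occurs at a cone vertex $gH$, giving infinitely many elements of $H$ reachable by short paths in $\hat\Gamma\setminus\{gH\}$, and then arguing with the structure of $H$-cells in a putative filling rather than just their number.

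Your $(\Leftarrow)$ direction is closer to a workable strategy, and you correctly flag the decomposition-and-assembly step as the crux. Note, however, that fineness plus cocompactness gives finitely many $G$-orbits of circuits of each length, but you still need finite relative presentability to know that each representative circuit \emph{has} a filling at all; fineness alone does not produce one. The bookkeeping you mention is real: one typically passes through a simply-connected $2$-complex whose $1$-skeleton is $\hat\Gamma$ and whose $2$-cells are the finitely many relator and $H$-cell types, and then uses fineness to bound, for each $n$, the number of cells needed to fill any loop of length $n$. This is carried out carefully in \cite{HuMaSa21}.
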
 

 Every coned-off Cayley graph $\hat\Gamma(G,\mathcal{H},S)$ with $S$ a finite relative generating set is a Cayley-Abels graph. The following result implies that Coned-off Cayley graphs are, up to quasi-isometry,  independent of the choice of finite generating set, and we denote them by  $\hat \Gamma(G, \mathcal{H})$.      Observe now that Theorem~\ref{thm:DehnFunctionFine} also follows from the following result. 
 
 \begin{theorem}\emph{\cite[Theorem H]{ArMP22}}\label{thmX:uniqueCAgraph}
If $\Gamma$ and $\Delta$ are Cayley-Abels graphs of the proper pair $(G,\mathcal H)$, then:
\begin{enumerate}
    \item $\Gamma$ and  $\Delta$ are quasi-isometric, and
    \item   $\Gamma$ is fine if and only if $\Delta$ is fine.
\end{enumerate}
\end{theorem}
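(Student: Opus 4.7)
The plan is to exploit the rigidity forced by axiom (4) of the Cayley--Abels definition. Together with the properness of $(G,\mathcal H)$, axiom (4) implies that for each infinite $H\in\mathcal H$ there is exactly one $G$-orbit of vertices with stabilizer $H$ in each of $\Gamma$ and $\Delta$, and every infinite-stabilizer vertex lies in such an orbit. This yields a canonical $G$-equivariant bijection between the sets of infinite-stabilizer vertex orbits of the two graphs. After matching the remaining finitely many finite-stabilizer orbits arbitrarily, I obtain a $G$-equivariant map $f_0\colon V(\Gamma)\to V(\Delta)$ that preserves stabilizers on infinite-stabilizer orbits.

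For part (1), I would extend $f_0$ equivariantly to a graph map by assigning to each $G$-orbit of edges of $\Gamma$ a path in $\Delta$ joining the images of its endpoints; cocompactness ensures only finitely many orbit choices are needed. A symmetric construction produces $g\colon\Delta\to\Gamma$. The compositions $g\circ f$ and $f\circ g$ are $G$-equivariant and, because edge stabilizers are finite and the number of vertex orbits is finite, they displace every vertex by a uniformly bounded amount. This is a Milnor--Schwarz style argument adapted to cocompact actions with finite edge stabilizers, and it produces a $G$-equivariant quasi-isometry $\Gamma\to\Delta$.

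For part (2), the nontrivial task is transferring fineness at a parabolic vertex $v\in\Gamma$ with infinite stabilizer $H$ to its image $w=f_0(v)\in\Delta$, which shares the stabilizer $H$ by construction. Since $H$ acts on $T_v\Gamma$ and on $T_w\Delta$ by isometries of the angle metric, and cocompactness with finite edge stabilizers forces each link to have finitely many $H$-orbits, fineness at $v$ is equivalent to local finiteness of the quotient $(T_v\Gamma/H,\angle_v)$. My plan is to use the $G$-equivariant quasi-isometry of part (1) to induce an $H$-equivariant coarse-Lipschitz comparison between $T_v\Gamma$ and $T_w\Delta$, and then to show that any two neighbors of $w$ joined by a short embedded path in $\Delta-\{w\}$ pull back through $g$ to a neighborhood of $v$ whose short embedded paths avoiding $v$ control, up to bounded multiplicity, the paths on the $\Delta$ side. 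Combining this with the shared $H$-action transfers local finiteness of the angle metric from $v$ to $w$.

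The main obstacle is precisely that fineness is not a quasi-isometry invariant for general graphs, so the argument of part (2) depends essentially on the $G$-equivariance of $f$ and on the fact that $v$ and $w$ have the same stabilizer. The delicate book-keeping lies in turning a short simple path in $\Gamma-\{v\}$, pushed forward via $f$ to a short walk in $\Delta$, into a uniformly bounded family of embedded paths in $\Delta-\{w\}$, and checking that distinct $H$-orbits of short embedded paths at $v$ correspond under this process to at most boundedly many $H$-orbits of short embedded paths at $w$. Finite edge stabilizers and the shared $H$-action are the tools that make this multiplicity finite, but carrying out the counting cleanly is where I expect the technical difficulty to concentrate.
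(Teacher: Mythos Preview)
The paper does not prove this theorem; it quotes it verbatim from \cite[Theorem~H]{ArMP22} and uses it as a black box to deduce Theorem~\ref{thm:DehnFunctionFine}. So there is no in-paper proof to compare your proposal against.

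That said, let me comment on your sketch. Your approach to part~(1) is standard and should go through: the equivariant matching of infinite-stabilizer orbits forced by axiom~(4) and properness, followed by arbitrary equivariant choices on the remaining orbits and on edge-orbit images, is exactly the skeleton of the Milnor--Schwarz style argument used in this setting.

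Your approach to part~(2), however, has a genuine gap. A $G$-equivariant quasi-isometry $f\colon\Gamma\to\Delta$ with $f(v)=w$ does \emph{not} induce any map $T_v\Gamma\to T_w\Delta$, let alone one that is coarse-Lipschitz for the angle metrics: neighbors of $v$ need not map anywhere near $w$, and the angle metric measures lengths of paths \emph{avoiding} the cone point, a constraint that quasi-isometries simply do not respect. The step where you ``pull back through $g$'' a short embedded path in $\Delta-\{w\}$ produces only a coarse sequence of vertices in $\Gamma$; connecting them requires new paths in $\Gamma$ that may well pass through $v$, and there is no mechanism in your outline to prevent this or to bound how often it happens. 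The $H$-equivariance and finite edge stabilizers help with orbit-counting but do not by themselves control passage through $v$.

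The argument that actually works (and is the one carried out in the cited reference) bypasses quasi-isometry entirely for part~(2). One shows that any two Cayley--Abels graphs for the same proper pair are related by a finite sequence of elementary moves: adding or deleting a single $G$-orbit of edges with finite stabilizers, or a single $G$-orbit of vertices with finite stabilizers. Each such move is then shown to preserve fineness directly, using Bowditch's characterization via circuits (finitely many embedded cycles of each bounded length through a given edge). This reduction to elementary moves is the missing idea in your sketch.
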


\section{Combination Theorems for Graphs}\label{sec:03}
In this section, we state two technical results, Theorems~\ref{thm:CombinationFine} and~\ref{thm:HNNFine}, which will be proven in the subsequent sections. The section includes how to  deduce the main results of the article, Theorems~\ref{thmx:Io} and~\ref{thmx:IIIo}, from these technical results.   

\begin{thm}\label{thm:CombinationFine}
For $i\in\{
 1,2\}$, let $(G_i, \mathcal{H}_i\cup\{K_i\})$ be a pair and   $\partial_i\colon C\to K_i$  a group monomorphism. Let $G=G_1\ast_C G_2$  denote the amalgamated product determined by
$G_1\xleftarrow{\partial_1}C\xrightarrow{\partial_2}G_2$, and  $\mathcal{H}=\mathcal{H}_1\cup\mathcal{H}_2$.
Let $\Gamma_i$ be a $G_i$-graph that has a vertex $x_i$ with $G_i$-stabilizer $K_i$. Then there is a $G$-graph $\Gamma$ with the following properties:
\begin{enumerate}
    \item $\Gamma$ has a vertex $z$ such that the $G$-stabilizer  $G_z=\langle K_1,K_2\rangle$, and there is a $G_i$-equivariant inclusion $\Gamma_i\hookrightarrow \Gamma$ that maps $x_i$ to $z$.
    \item If $\Gamma_i$ is connected for $i=1,2$, then $\Gamma$ is connected. 

    \item If every $H\in\mathcal{H}_i\cup\{K_i\}$ is the $G_i$-stabilizer of a vertex of $\Gamma_i$ for $i=1,2$, then every $H\in \mathcal{H}\cup\{\langle K_1,K_2 \rangle\}$ is the $G$-stabilizer of a vertex of $\Gamma$.
    
        \item If vertex $G_i$-stabilizers in $\Gamma_i$  are finite or conjugates of subgroups in $\mathcal{H}_i\cup\{ K_i\}$ for $i=1,2$, then vertex $G$-stabilizers in $\Gamma$ are finite or conjugates of subgroups in $\mathcal H\cup\{ \langle K_1,K_2\rangle\}$.
        
        \item If $\Gamma_i$ has finite edge $G_i$-stabilizers for $i=1,2$, then $\Gamma$ has finite edge $G$-stabilizers.
        \item If $\Gamma_i$ has finitely many $G_i$-orbits of vertices (edges) for $i=1,2$, then $\Gamma$ has finitely many $G$-orbits of vertices (resp. edges).
    \item If $\Gamma_i$ is fine for $i=1,2$,   then $\Gamma$ is fine.
    \item If $\Gamma_i$ is fine at $V_\infty(\Gamma_i)$ for $i=1,2$, then $\Gamma$ is fine at $V_\infty(\Gamma)$.
    \item If $\Gamma_i$ is hyperbolic for $i=1,2$, then $\Gamma$ is hyperbolic.
     \item If $\Gamma_i$ is simplicial for $i=1,2$, then $\Gamma$ is simplicial.
\end{enumerate}
\end{thm}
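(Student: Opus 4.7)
The plan is to construct $\Gamma$ as a \emph{tree of graphs} modeled on the Bass-Serre tree $T$ of $G = G_1 \ast_C G_2$ (which has vertex set $G/G_1 \sqcup G/G_2$ and edge set $G/C$). Let $\Gamma_i' := G \times_{G_i} \Gamma_i$ be the $G$-graph induced from $\Gamma_i$; concretely, it consists of a disjoint copy of $\Gamma_i$ for each coset in $G/G_i$. Define
\[
\Gamma := \left( \Gamma_1' \sqcup \Gamma_2' \right) \big/ \sim,
\]
where $\sim$ is generated by $(g, x_1) \sim (g, x_2)$ for every $g \in G$. This is well defined because every $c \in C$ fixes both $x_1$ and $x_2$. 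Let $z$ be the image of $(1, x_1)$ under the quotient map. The crucial algebraic observation is that the $G$-stabilizer of $z$ equals $L := \langle K_1, K_2 \rangle$: an element $g$ satisfies $g \cdot z = z$ iff $(g, x_1)$ and $(1, x_1)$ represent the same equivalence class in $\Gamma$, and tracing the generators of $\sim$ together with the $\Gamma_i'$-equivalences shows this happens iff $g$ admits an expression as an alternating product of elements of $K_1$ and $K_2$, iff $g \in L$. By Bass-Serre theory applied to the splitting $L = K_1 \ast_C K_2$, this is consistent with the normal form in $L$.

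Properties (1)--(6) follow from direct inspection of the construction. The $G_i$-equivariant inclusion $\Gamma_i \hookrightarrow \Gamma$ sends $v \mapsto [(1, v)]$, taking $x_i$ to $z$. Connectedness (2) follows because $T$ is connected, each copy of $\Gamma_i$ sits over a vertex of $T$, and the identifications link copies over adjacent vertices. For (3), every $H \in \mathcal{H}_i$ stabilizes a vertex of $\Gamma_i$ and hence of $\Gamma$, and $L$ stabilizes $z$. For (4), a non-identified vertex $[(g, v)]$ has $G$-stabilizer $g G_i^v g^{-1}$, where $G_i^v$ is the $G_i$-stabilizer of $v$ in $\Gamma_i$, while identified vertices have stabilizer conjugate to $L$; after a preliminary normalization of $\Gamma_i$ that consolidates the vertices with stabilizer conjugate to $K_i$ into the $G_i$-orbit of $x_i$, the conclusion follows. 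For (5), no new edges are added, so edge $G$-stabilizers in $\Gamma$ are conjugates of edge $G_i$-stabilizers in $\Gamma_i$. For (6), $G$-orbits of vertices in $\Gamma_i'$ correspond to $G_i$-orbits in $\Gamma_i$, and the identification merges the orbits of $(1, x_1)$ and $(1, x_2)$ into the single orbit $G/L$.

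The heart of the proof lies in (7)--(9), where the key tool is the natural tree-of-spaces structure of $\Gamma$ over $T$: each copy of $\Gamma_i$ sits over a vertex of $T$, and each identified vertex $g \cdot z$ sits over the corresponding edge of $T$, shared between adjacent vertex copies. Consequently, any embedded path in $\Gamma$ projects to a walk in $T$, and any transition between distinct copies of $\Gamma_i$ must pass through identified cut vertices. Fineness at any vertex $v \in \Gamma$ then follows by case analysis combined with induction on the length of the projected walk: a bounded-length embedded path joining two neighbors of $v$ decomposes into finitely many segments within individual copies of $\Gamma_i$, each of which has finitely many options by the fineness hypothesis. Fineness at $V_\infty(\Gamma)$ reduces to the same argument applied at vertices of infinite stabilizer, noting that identified vertices have stabilizer conjugate to $L$. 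For hyperbolicity, a Bestvina-Feighn-style combination argument applies: $\Gamma$ is a tree of hyperbolic graphs glued at single vertices (trivial edge spaces), so hyperbolicity lifts from the constituent pieces. The main obstacle will be the combinatorial verification of fineness, where one must carefully argue that the finite-path-count property from each $\Gamma_i$ transfers through the identifications; the rigidity of the tree $T$ is the essential feature that controls embedded paths traversing multiple copies of $\Gamma_i$.
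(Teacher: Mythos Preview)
Your construction is exactly the paper's: the $C$-pushout of $G\times_{G_1}\Gamma_1$ and $G\times_{G_2}\Gamma_2$ with respect to $(x_1,x_2)$, and your treatment of items (1)--(6) follows the same lines (the paper packages these into a single proposition on pushouts). One difference: your ``preliminary normalization'' for (4) is not in the paper; the paper simply records that a non-identified vertex retains its $G_i$-stabilizer while identified vertices have stabilizer conjugate to $\langle K_1,K_2\rangle$, and declares (4) a direct corollary.

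For (7)--(9) the paper takes a shorter route than you do. Rather than projecting paths to the Bass--Serre tree and inducting, or invoking a Bestvina--Feighn combination theorem, the paper isolates the single structural fact that every vertex in $G.z$ is a \emph{cut vertex} of $\Gamma$, and that the closures of the components of $\Gamma\setminus G.z$ are precisely the $G$-translates of $\Gamma_1$ and $\Gamma_2$ (this is proved by mapping $\Gamma$ to the Bass--Serre tree of the refined splitting $G_1\ast_{K_1}(K_1\ast_C K_2)\ast_{K_2}G_2$ and checking $z$ has a singleton preimage). A short standalone lemma then says: given any collection $U$ of cut vertices, $\Gamma$ is hyperbolic iff the pieces are uniformly $\delta$-hyperbolic, and $\Gamma$ is fine at $v$ iff every piece containing $v$ is fine at $v$. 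Since only two isomorphism types of pieces occur, uniform $\delta$ is automatic. This collapses your path-decomposition induction for fineness into a one-line observation (any embedded circuit through $v$ already lies in a single piece) and makes Bestvina--Feighn unnecessary for hyperbolicity (any geodesic triangle lies in a single piece). Your approach is correct, but the cut-vertex formulation is what makes (7)--(9) essentially free.
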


Let us explain how Theorem~\ref{thmx:Io} follows from the above result.
 
\begin{proof}[Proof of Theorem~\ref{thmx:Io}]
For the first statement, suppose $\mathcal{H}_i\cup \{K_i\}$ is hyperbolically embedded in $G_i$. Then 
$\mathcal{H}_i\cup \{K_i\}$ is an almost malnormal collection of subgroups of $G_i$ by~\cite[Prop. 4.33]{DGO17}. In particular,   $(G_i, \mathcal{H}_i\cup \{K_i\})$ is a proper pair.  By Theorem~\ref{thm:Farhan}, there is a $(G_i, \mathcal{H}_i\cup \{K_i\})$-graph   $\Gamma_i$. Let $x_i$ be a vertex of $\Gamma_i$ with $G_i$-stabilizer $K_i$. Applying  Theorem~\ref{thm:CombinationFine} to $\Gamma_1$,  $\Gamma_2$, $x_1$ and $x_2$,  we obtain a $(G_1\ast_CG_2, \mathcal{H}\cup\{\langle K_1,K_2 \rangle\})$-graph.
Note that $(G_1\ast_CG_2, \mathcal{H}\cup\{\langle K_1,K_2 \rangle\})$ is a proper pair by a standard argument using normal forms.
Then invoke Theorem~\ref{thm:Farhan} to obtain that $\mathcal{H}\cup\{\langle K_1,K_2 \rangle\}$ is hyperbolically embedded in $G_1\ast_CG_2$.

The second statement is proved analogously.  Suppose the relative Dehn function of $(G_i,\mathcal{H}_i\cup\{K_i\})$ is well-defined. 
By~\cite[Prop. 2.36]{Os06b}, the pair $(G_i,\mathcal{H}_i\cup\{K_i\})$ is proper. It follows that $(G_1\ast_CG_2, \mathcal{H}\cup\{\langle K_1,K_2 \rangle\})$ is also a proper pair by a standard argument using normal forms.
By Theorem~\ref{thm:DehnFunctionFine},  $(G_i,\mathcal{H}_i\cup\{K_i\})$ is finitely presented and admits a fine Cayley-Abels graph $\Gamma_i$. In particular, there is a vertex $x_i\in \Gamma_i$ with $G_i$-stabilizer equal to $K_i$.
Apply Theorem~\ref{thm:CombinationFine} to $\Gamma_1$,   $\Gamma_2$ and the vertices $x_1,x_2$ to obtain a fine Cayley-Abels graph $\Gamma$ for the pair $(G_1\ast_CG_2, \mathcal{H}\cup\{\langle K_1,K_2 \rangle\})$. Since $(G_1\ast_CG_2, \mathcal{H}\cup\{\langle K_1,K_2 \rangle\})$ is finitely presented by Lemma~\ref{prop:AmalgamatedPresentation}, then Theorem~\ref{thm:DehnFunctionFine} implies that the relative Dehn function of $(G_1\ast_CG_2, \mathcal{H}\cup\{\langle K_1,K_2 \rangle\})$ is well-defined.
\end{proof}

\begin{thm}\label{thm:HNNFine}
Let $(G, \mathcal{H} \cup\{K,L\})$ be a pair with $K\neq L$, $C\leq K$, and  $\varphi\colon C\to L$ a group monomorphism. Let $G\ast_\varphi$ denote the HNN-extension
$\langle G, t\mid  t c t^{-1} =\varphi(c)~\text{for all $c\in C$} \rangle$.
Let $\Delta$ be a $G$-graph that has vertices $x$ and $y$ such that their $G$-stabilizers are $K$ and $L$ respectively, and their $G$-orbits are disjoint. Then there is a $G\ast_\varphi$-graph $\Gamma$ with the following properties:
\begin{enumerate}
    \item $\Gamma$ has a vertex $z$ such that $G_z=\langle K^t,L\rangle$, and there is a $G$-equivariant inclusion $\Delta\hookrightarrow \Gamma$ such that  $x\mapsto t^{-1}.z$ and $y\mapsto z$.
    
    \item If $\Delta$ is connected, then $\Gamma$ is connected. 

    \item If every $H\in\mathcal{H}\cup\{K,L\}$ is the $G$-stabilizer of a vertex of $\Delta$, then every $H\in \mathcal{H}\cup\{\langle K^t,L \rangle\}$ is the $G\ast_\varphi$-stabilizer of a vertex of $\Gamma$.
    
    \item If vertex $G$-stabilizers in $\Delta$  are finite or conjugates of subgroups in $\mathcal{H}\cup\{ K,L\}$, then vertex $G\ast_\varphi$-stabilizers in $\Gamma$ are finite or conjugates of subgroups in $\mathcal H\cup\{ \langle K^t,L\rangle\}$.
        
        \item If $\Delta$ has finite edge $G$-stabilizers, then $\Gamma$ has finite edge $G\ast_\varphi$-stabilizers.
        \item If $\Delta$ has finitely many $G$-orbits of vertices (edges), then $\Gamma$ has finitely many $G\ast_\varphi$-orbits of vertices (resp. edges).
    \item If $\Delta$ is fine,   then $\Gamma$ is fine.
    \item If $\Delta$ is fine at $V_\infty(\Delta)$, then $\Gamma$ is fine at $V_\infty(\Gamma)$.
    \item If $\Delta$ is hyperbolic, then $\Gamma$ is hyperbolic.
\end{enumerate}
\end{thm}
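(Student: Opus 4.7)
The plan is to construct $\Gamma$ by blowing up the Bass-Serre tree $T$ of $G\ast_\varphi$: at each vertex $vG$ of $T$ place a copy $\Delta_{vG}$ of $\Delta$, and for each edge of $T$ from $vG$ to $vtG$ identify the vertex $v.y\in\Delta_{vG}$ with the vertex $vt.x\in\Delta_{vtG}$. The HNN relation $tct^{-1}=\varphi(c)$ together with the hypotheses $C\le K$ and $\varphi(C)\le L$ gives $\varphi(C)\le L\cap K^t$, which is exactly what makes these identifications $G\ast_\varphi$-equivariant. Concretely, set
\[
\Gamma \;=\; \Bigl(\bigsqcup_{vG\in G\ast_\varphi/G}\Delta_{vG}\Bigr)\Big/\sim,
\]
where $\sim$ is the equivalence generated by $v.y\sim vt.x$ for every $v\in G\ast_\varphi$. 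Take $z$ to be the class $[y]=[t.x]$; the base copy $\Delta_G$ then supplies the required $G$-equivariant inclusion $\Delta\hookrightarrow\Gamma$ with $y\mapsto z$ and $x\mapsto t^{-1}\cdot z$, realizing item~(1).

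Items~(1)--(6) and~(9) follow from a direct analysis of this construction. The key stabilizer computation for item~(1) uses the hypothesis that $x$ and $y$ have disjoint $G$-orbits: this forces the $\sim$-equivalence class of $y$ in $\bigsqcup_{vG}\Delta_{vG}$ to be exactly the $\langle L,K^t\rangle$-orbit of $\{y,t.x\}$, whence $G_z=\langle L,K^t\rangle$. Items~(2) and~(6) are immediate since $T$ is connected and $\Gamma=G\ast_\varphi\cdot\Delta_G$. Items~(3)--(5) follow from the observation that every vertex, respectively edge, of $\Gamma$ is either an identified vertex in the $G\ast_\varphi$-orbit of $z$, or a non-identified vertex (edge) contained in a unique translate $\Delta_{vG}$; the former have stabilizers conjugate to $\langle K^t,L\rangle$, while the latter inherit their stabilizers from the corresponding vertex (edge) of $\Delta$. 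Hyperbolicity, item~(9), follows from the standard fact that a tree of uniformly Gromov-hyperbolic graphs glued in single cut-vertices is hyperbolic; geodesic triangles in $\Gamma$ can be decomposed into uniformly thin triangles inside individual $\Delta_{vG}$'s.

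The principal obstacle is fineness, items~(7) and~(8). The crucial feature is the tree-like global structure of $\Gamma$ inherited from $T$: adjacent copies $\Delta_{vG}$ and $\Delta_{vtG}$ meet in exactly one identified vertex, so any embedded path in $\Gamma$ between two vertices of distinct copies $\Delta_{vG}$ and $\Delta_{v'G}$ must traverse the sequence of identified cut-vertices dictated by the unique $T$-geodesic from $vG$ to $v'G$. Consequently, any simple circuit in $\Gamma$ either lies inside a single $\Delta_{vG}$ or traverses at least one identified vertex twice, which yields an inductive decomposition of circuits in $\Gamma$ into circuits in the individual copies. Combined with the fineness of $\Delta$, this decomposition bounds, for any two vertices of $\Gamma$ and any integer $n$, the number of embedded paths of length $\le n$ between them by the analogous quantity in $\Delta$ summed over the finitely many $T$-combinatorial routes of length $\le n$ between the images of the endpoints. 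The quantitative bookkeeping of this decomposition --- tracking how a long excursion in $\Gamma$ is distributed across the Bass-Serre tree and in particular controlling angles at an identified vertex $v=[g.y]=[gt.x]$ where neighbors arrive from several distinct copies --- is the delicate step, and is the HNN-analogue of the path-decomposition argument used for the amalgamated case in Theorem~\ref{thm:CombinationFine}. The same argument restricted to infinite-stabilizer vertices yields item~(8).
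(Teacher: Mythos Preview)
Your construction---placing a copy of $\Delta$ at each coset $vG$ and gluing $v.y$ to $vt.x$---is exactly the paper's $\varphi$-coalescence of $\Delta$ (Definition~\ref{defn-coalescence2}), and your arguments for items (1)--(6) and (9) track Proposition~\ref{lem:HNN} essentially verbatim. For fineness you argue via the embedded-path characterization and a route decomposition along the tree of copies, whereas the paper packages the same cut-vertex structure into the angle-metric Lemma~\ref{lem:FineAndHyp}; the underlying mechanism (each identified vertex is a cut vertex, so embedded circuits and angle-balls are confined to single copies) is identical.
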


\begin{proof}[Proof of Theorem~\ref{thmx:IIIo}]
This proof is completely analogous to the proof of Theorem~\ref{thmx:Io}: invoke 
Theorem~\ref{thm:HNNFine}
and Lemma~\ref{prop:RelPreHNN}
instead of  Theorem~\ref{thm:CombinationFine} and Lemma~\ref{prop:AmalgamatedPresentation}
respectively. 
\end{proof}

\section{Amalgamated Products and Graphs}\label{sec:04}

This section describes an argument proving Theorem~\ref{thm:CombinationFine}. While the statement of this result seems intuitive, we are not aware of a full account of those techniques in a common framework, so this section provides a detailed construction.

\subsection{Pushouts in the category of $G$-sets.} Let $\phi\colon R\to S$ and $\psi\colon R \to T$ be $G$-maps. The pushout of $\phi$ and $\psi$ is defined as follows. Let $Z$ be the $G$-set  obtained as the quotient of 
the disjoint union of $G$-sets  $S\sqcup T$ by the equivalence  relation generated by all pairs   $s\sim t$ with $s\in S$ and $t\in T$ satisfying that  there is $r\in R$ such that $\phi(r)=s$ and $\psi(r)=t$.
There are canonical $G$-maps $\imath\colon S \to Z$ and $\jmath\colon T \to Z$ such that $\imath\circ \phi= \jmath \circ \psi$. This construction satisfies the universal property of pushouts in the category of $G$-sets.

\begin{proposition}\label{lem:pushout-set}
 Let $\phi\colon R\to S$ and $\psi\colon R \to T$ be $G$-maps. Consider the pushout 
  \[ 
  \begin{tikzcd} 
                    &  S \arrow{rd}{\imath} &  \\
R \arrow{ru}{\phi} \arrow{rd}{\psi} &     & Z \\                    &  T\arrow{ru}{\jmath} &
\end{tikzcd} \]
of $\phi$ and $\psi$. 
Suppose there is $r\in R$ such that $R=G.r$. If $s=\phi(r)$, $t=\psi(r)$ and $z=\imath(s)$ then the $G$-stabilizer $G_z$ equals the subgroup $\langle G_s,G_t \rangle$. 
\end{proposition}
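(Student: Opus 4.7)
The plan is to prove the two inclusions separately. The inclusion $\langle G_s, G_t\rangle \subseteq G_z$ is immediate from the $G$-equivariance of $\imath$ and $\jmath$: for $g\in G_s$, $g.z = g.\imath(s) = \imath(g.s) = \imath(s) = z$, and symmetrically for $g\in G_t$ using $\jmath(t)=z$.

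For the reverse inclusion, I would unpack the construction of $Z$ as the quotient $(S\sqcup T)/\sim$, where $\sim$ is the equivalence relation generated by the pairs $\phi(r')\sim \psi(r')$ for $r'\in R$. The crucial simplification comes from the hypothesis $R=G.r$: every such generating pair has the form $g'.s\sim g'.t$ for some $g'\in G$, because $\phi(g'.r)=g'.s$ and $\psi(g'.r)=g'.t$ by equivariance. Thus the only identifications made in forming $Z$ are of the shape ``translates of $s$ are identified with the correspondingly translated $t$''.

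The heart of the argument is to characterize the equivalence class of $s$ in $S\sqcup T$ as
\[
[s] = \{h.s : h\in \langle G_s,G_t\rangle\} \;\cup\; \{h.t : h\in \langle G_s,G_t\rangle\}.
\]
I would prove this by induction on the length of a zigzag of elementary identifications joining an element to $s$. If $u\in S$ is directly identified with $g'.t\in T$, then $u=g''.s$ with $g''.t=g'.t$, so $g''\in g'G_t$; and if $u\in T$ is directly identified with $g'.s\in S$, then $u=g''.t$ with $g''.s=g'.s$, so $g''\in g'G_s$. Hence each step of the zigzag multiplies the coefficient by an element of $G_s$ or $G_t$, staying inside $\langle G_s,G_t\rangle$; starting from $s$ (coefficient $1$), every element reachable through the zigzag is of the claimed form. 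Conversely, every word $h=g_1g_2\cdots g_n$ with $g_i$ alternating between $G_s$ and $G_t$ gives an explicit zigzag from $s$ to $h.s$ (or to $h.t$).

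Once the characterization of $[s]$ is established, the conclusion is quick: if $g\in G_z$, then $\imath(g.s)=g.z=z=\imath(s)$, so $g.s\sim s$ in $S\sqcup T$. The characterization above yields $h\in \langle G_s,G_t\rangle$ with $g.s=h.s$, so $h^{-1}g\in G_s$ and therefore $g\in h\cdot G_s\subseteq \langle G_s,G_t\rangle$. The only delicate part is the bookkeeping on the alternating zigzag; I do not anticipate any real obstacle beyond that.
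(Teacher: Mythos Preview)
Your proof is correct and follows essentially the same approach as the paper: both arguments unwind the equivalence class of $s$ as a zigzag of generating identifications and show inductively that the coefficients accumulated along the zigzag stay in $\langle G_s, G_t\rangle$. The paper's bookkeeping is organized around explicit preimages $r_i, r_i'\in R$ and the conjugate stabilizers $G_{s_i}, G_{t_i}$, whereas you track a single running coefficient in $G$; these are the same induction packaged slightly differently.
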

\begin{proof}
Since $\imath$ and $\jmath$ are $G$-maps, $\langle G_s,G_t \rangle \leq G_z$. Conversely, let $g\in G_z$.  If $g\in G_s$ then $g\in \langle G_s, G_t\rangle$. Suppose $g\not \in G_s$.  

Let $r_0$ denote the element $r\in R$ in the statement, in particular, $s=\phi(r_0)$,   $t=\psi(r_0)$ and $R=G.r_0$. Since $\jmath (t)=\imath(g.s)$, the definition of $Z$ as a collection of equivalence classes in $S\sqcup T$ implies 
that  there is a sequence $r_0',r_1,r_1'\ldots , r_k, r_k'$ of elements of $R$ such that 
\[ t=\psi(r_0'),\ \phi(r_0')=\phi(r_1),\ \psi(r_1)=\psi(r_1'),\  \ldots,\ \psi(r_k)=\psi(r_k'),\ \phi(r_k')=g.s.\]
Let $s_i=\phi(r_{i-1} ')=\phi(r_i)$ and $t_i=\psi(r_i)=\psi(r_i')$.
Since $R=G.r_0$, there are elements $a_0,a_1,\ldots,a_k$ and $b_0,b_1,\ldots, b_{k-1}$ of $G$ such that 
\[ a_i.r_i=r_i' \quad \text{and} \quad b_j.r_j'=r_{j+1} \]
for $0\leq i\leq k$ and $0\leq j<k$. Then \[g.s=\phi(r_k')=\phi(a_k b_{k-1} a_{k-1} \ldots b_0a_0.r_0)= a_k b_{k-1} a_{k-1} \ldots b_0a_0.s \]
and hence $ a_k b_{k-1} a_{k-1} \ldots b_0a_0 \in gG_s.$
Since $G_s \leq \langle G_s, G_t \rangle$, to prove that $g\in \langle G_s, G_t \rangle$ is enough to show that $a_i, b_j \in \langle G_s, G_t \rangle$. We will  argue by induction. 

First note that since $\phi$ and $\psi$ are $G$-maps
\[ a_i.s_i = s_{i+1} \quad \text{and} \quad b_j.t_{j}=t_{j+1
},\]
and hence 
\[  G_{s_{i+1}}= a_iG_{s_i}a_i^{-1}   \quad \text{and} \quad G_{t_{j+1}}=b_jG_{t_{j}}b_j^{-1}.\]
Moreover,   $t_i=\psi(r_i)=\psi(r_i')=\psi(a_i.r_i)=a_i.t_i$ implies 
\[a_i\in G_{t_i},\]
and analogously 
$s_{j+1}=\phi(r_{j+1})=\phi(b_j.r_j')=b_j.s_{j+1}$ implies
\[b_j\in G_{s_{j+1}}.\]
 
Since $t_0=t$ and $s_0=s$, we have that \[a_0\in G_{t_0} \leq \langle G_s, G_t \rangle, \quad \text{and} \quad b_0\in G_{s_1}=a_0G_{s_0}a_0^{-1} \leq \langle G_s, G_t \rangle.\]
Suppose $i<k$, $a_i,b_i\in \langle G_s, G_t \rangle$,  $G_{s_i}\leq \langle G_s, G_t \rangle$ and $G_{t_i}\leq \langle G_s, G_t \rangle$. Then
\[ a_{i+1} \in G_{t_{i+1}} = b_iG_{t_i}b_i^{-1} \leq \langle G_s, G_t \rangle,\]
and hence
\[  G_{s_{i+1}}= a_iG_{s_{i}}a_i^{-1}  \leq \langle G_s, G_t \rangle.\]
In the case that $i+1<k$,  
\[ b_{i+1} \in G_{s_{i+2}} = a_{i+1}G_{s_{i+1}}a_{i+1}^{-1} 
\leq   \langle G_s, G_t \rangle. \]
Therefore, by induction, $a_i,b_j\in \langle G_s, G_t \rangle$ for $0\leq i\leq k$ and $0\leq j<k$.
\end{proof}

\subsection{Extending actions on sets.} In the case that $K$ is a subgroup of $G$ and $S$ is a $K$-set, one can extend the $K$-action on $S$ to a $G$-set $G\times_K S$ that we now describe.  Up to isomorphism of $K$-sets, we can assume that $S$ is a disjoint union of $K$-sets
\[ S  =  \bigsqcup_{i\in I} K/K_i \]
where  $K/K_i$ is the $K$-set consisting of left cosets of a subgroup $K_i$ of $K$.
Then the $G$-set $G\times_K S$ is defined as a disjoint union of $G$-sets
\[  G\times_K S := \bigsqcup_{i\in I} G/K_i .\]
Observe that the canonical $K$-map \[ \imath\colon S \to G\times_K S, \qquad K_i \mapsto K_i\]
is injective. This construction satisfies a number of useful properties that we summarize in the following proposition.

For $n$ a natural number and  a set $X$, let $[X]^n$ denote the collection of subsets of $X$ of cardinality $n$. If $X$ is a $G$-set, then $[X]^n$ is a $G$-set with action defined as $g.\{x_1,\ldots ,x_n\}=\{g.x_1,\ldots ,g.x_n\}$.

\begin{proposition}\label{lem:fromHsetstoGset}
Let $K\leq G$ and $S$ a $K$-set. 
\begin{enumerate}
\item The canonical $K$-map $ \imath\colon S \to G\times_K S$ induces a bijection of orbit spaces $S/K \to   (G\times_K S) /G$.\label{2.1-orbits}

\item For each $s\in S$, the $K$-stabilizer $K_s$ equals the $G$-stabilizer $G_{\imath(s)}$.\label{2.1-stabilizers}

\item If $T$ is a $G$-set and $f\colon S \to T$ is  $K$-equivariant, then there is a unique $G$-map $\tilde f\colon  G\times_K S \to T$ such that $\tilde f \circ \imath = f$.\label{2.1-universal} 
 
\item  If $\imath(S)\cap g.\imath(S)\neq \emptyset$ for $g\in G$, then $g\in K$ and $\imath(S)=g.\imath(S)$.
\label{2.1-disjunion}

\item  In part three, if $f$ induces an injective map $S/K \to T/G$ and  $K_s = G_{f(s)}$ for every $s\in S$, then $\tilde f$ is injective.

\item  Let $\jmath\colon [S]^n \to G\times_K[S]^n$ be the  canonical map. Then for every $n\in\mathbb N$, there is a $G$-equivariant injection $\hat \imath \colon G\times_K [S]^n \to [G\times_K S]^n$ such that $\hat \imath \circ \jmath = \bar \imath$ where
$\bar \imath \colon [S]^n \to [G\times_K S]^n$ is the natural $K$-map induced by $\imath\colon S \to G\times_K S$.  
\end{enumerate}
\end{proposition}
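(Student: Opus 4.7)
My plan is to work directly from the defining decomposition $S = \bigsqcup_{i \in I} K/K_i$ and $G \times_K S = \bigsqcup_{i \in I} G/K_i$, so that parts (1) and (2) reduce to bookkeeping. The canonical $\imath$ sends the $K$-orbit $K/K_i \subseteq S$ onto the $G$-orbit $G/K_i \subseteq G \times_K S$, which gives the orbit bijection of (1), and a direct computation $K_{kK_i} = kK_ik^{-1} = G_{kK_i}$ handles (2).

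For (3), I will define $\tilde f$ one $G$-orbit at a time: on $G/K_i$ send $gK_i \mapsto g \cdot f(K_i)$. Well-definedness uses $K$-equivariance of $f$ together with $K_i \leq K_{K_i}$; uniqueness is forced by $G$-equivariance and the fact that every $G$-orbit of $G \times_K S$ meets $\imath(S)$. For (4), if $\imath(s) = g \cdot \imath(s')$ with $s \in K/K_i$ and $s' \in K/K_j$, then both sides lie in $G/K_i$, forcing $i = j$, and a coset comparison in $G/K_i$ gives $g \in K$; once $g \in K$, the equality $\imath(S) = g \cdot \imath(S)$ is automatic since $S$ is $K$-invariant and $\imath$ is $K$-equivariant. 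Part (5) then combines (2) and (4) in a straightforward way: if $\tilde f(gK_i) = \tilde f(g'K_j)$, the orbit-injectivity of $f$ on $S/K$ forces $i = j$, and the stabilizer equality $K_{K_i} = G_{f(K_i)}$ then yields $g^{-1}g' \in K_i$.

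The step where I expect the main obstacle is (6), although the obstacle is organizational rather than conceptual. The plan is to apply (3) with $T = [G \times_K S]^n$ and $f = \bar\imath$ to produce the $G$-map $\hat\imath$ satisfying $\hat\imath \circ \jmath = \bar\imath$, and then to apply (5) to conclude $\hat\imath$ is injective. The only nontrivial checks for the hypotheses of (5) are that $\bar\imath$ induces an injection $[S]^n / K \to [G \times_K S]^n / G$ and that $K_A = G_{\bar\imath(A)}$ for every $A \in [S]^n$. Both reduce to (4): from $g \cdot \bar\imath(A) = \bar\imath(B)$ I pick any $a \in A$ and observe $g \cdot \imath(a) \in \imath(S)$, so $\imath(S) \cap g \cdot \imath(S) \neq \emptyset$, forcing $g \in K$; then $g \cdot A = B$ as subsets of $S$ follows from the injectivity of $\imath$, which is itself immediate from the decomposition. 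The challenge is to keep careful track of which copy of $S$, $[S]^n$, or their $G$-saturations an element inhabits while chaining these identifications together.
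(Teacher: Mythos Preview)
Your proposal is correct and follows essentially the same route as the paper: items (1)--(4) are treated as routine observations from the coset decomposition, item (5) is proved by the same orbit-plus-stabilizer argument, and item (6) is obtained exactly as you outline---apply (3) to $\bar\imath$ to produce $\hat\imath$, then use (4) to verify the orbit-injectivity and stabilizer hypotheses needed to invoke (5). The paper's write-up is terser (calling (1)--(4) ``observations'' and handling (6) in two sentences), but the logical structure is identical to yours.
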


\begin{proof}
The first four statements  are observations.  
For the fifth statement, suppose $\tilde f(\imath(s_1) ) = \tilde f(g.\imath (s_2))$. 
Then $f( s_1 )=g.f( s_2 )$. 
Since the map $S/K\to T/G$ induced by $f$ is injective, we have that $s_1$ and $s_2$ are in the same $K$-orbit in $S$, say  $s_2=k.s_1$ for $k\in K$. It follows that
$f( s_1 )=gk.f( s_1 )$, and since $K_{s_1} = G_{f(s_1)}$, we have that $gk\in K_{s_1}$. Therefore 
$\imath(s_1) =\imath(gk.s_1)= g.\imath(ks_1) =g.\imath(s_2)$.

The sixth statement is proved as follows. The $K$-map 
$\imath\colon S \to G\times_K S$ naturally induces a $K$-map 
$\bar \imath \colon [S]^n \to [G\times_K S]^n$. By the third statement, there is a unique $G$-map $\hat \imath \colon G\times_K [S]^n \to [G\times_K S]^n$ such that $\hat  \imath  \circ \jmath =  \bar \imath $ where $\jmath\colon [S]^n \to G\times_K[S]^n$.  As a consequence of the fourth statement,  $\imath\colon [S]^n \to [G\times_K S]^n$ induces an injective map  $[S]^n/K \to [G\times_K S]^n/G$ and $K_A = G_{\imath(A)}$ for every $A\in [S]^n$; therefore $\hat \imath$ is injective.
\end{proof}

As the reader might have noticed, this construction is an instance of general categorical phenomena; that formulation will have no use in this article so we will not discuss it.

\subsection{Graphs as 1-dimensional complexes} While the objectives of this section only require us to consider simplicial graphs, the category of simplicial graphs does not have pushouts~\cite{St83}. For this reason, it is convenient to work within the framework of 1-dimensional complexes or equivalently graphs in the sense that we describe below. We will only consider a particular class of pushouts of graphs that behaves well over simplicial graphs.
A \emph{graph} is a triple $(V,E,r)$, where $V$ and $E$ are sets, and $r\colon E \to [V]^2$ is a function where $[V]^n$ is the collection of nonempty subsets of $V$ of cardinality at most $n$. Elements of the set  $V$ and $E$ are called \emph{vertices} and \emph{edges} respectively; the function $r$ is called the \emph{attaching map} . For a graph $\Gamma$, we denote $V(\Gamma)$ and $E(\Gamma)$ its vertex and edge set, respectively. If $v\in V(\Gamma)$, $e\in E(\Gamma)$ and $v\in r(e)$, then $v$ is \textit{incident} to $e$, and $v$ is called an \emph{endpoint} of $e$. Vertices incident to the same edge are called \textit{adjacent}. 

The graph $(V,E,r)$ is \emph{simplicial} if every edge has two distinct endpoints and $r$ is injective. Equivalently, $(V,E,r)$ is simplicial if $r\colon E \to [V]^2$ is injective and its image does not intersect $[V]^1$.

A graph $\Delta$ is a \emph{subgraph} of a graph $\Gamma$ if $V(\Delta) \subset V(\Gamma)$,   $E(\Delta) \subset E(\Gamma)$ and $r_\Delta$ equals the restriction of $r_\Gamma$ to $E(\Delta)$. Abusing notation, we consider any vertex of a graph $\Gamma$ as an edgeless subgraph with a single vertex, and any 
 edge $e$ of $\Gamma$  as the subgraph with vertex set the set of vertices incident to $e$ in $\Gamma$ and edge set consisting of only $e$. 

  For a vertex $u$ of a simplicial graph $\Gamma=(V,E,r)$, let  $\mathsf{star_\Gamma(u)}$ denote the subgraph with vertex set $V(\mathsf{star}(u))=\{u\}\cup\{v\in V\mid \text{$v$ is adjacent to $u$}\}$ and edge set $E(\mathsf{star}(u)) = \{e\in E\mid \text{the endpoints of $e$ belong to $V(\mathsf{star}(u))$}\}$ and the attaching map is the corresponding  restriction of $r$.

{Our notion of morphism allows the collapse of edges to single vertices. Specifically, a \emph{morphism} of $\phi\colon (V,E,r) \to (V',E',r')$ of  graphs is a pair of maps $\phi_0\colon V\to V'$ and $\phi_1\colon E \to  V'\cup E'$ such that there is a commutative diagram 
\[
\begin{tikzcd} 
\phi_1^{-1}(E') \arrow["\phi_1"]{r}\arrow["r"]{d} & E'\arrow["r'"']{d} \\
{[V]^2} \arrow["\phi_0"]{r} & {[V']^2}
\end{tikzcd} \quad \begin{tikzcd} 
\phi_1^{-1}(V') \arrow["\phi_1"]{r}\arrow["r"]{d} & V'\arrow["\cong"']{d} \\
{[V]^2} \arrow["\phi_0"]{r} & {[V']^1 }
\end{tikzcd}\]
where the horizontal bottom arrow $\phi_0$ is the natural $G$-map induced by $\phi_0\colon V\to V'$, and $V'\to [V']^1$ is the natural bijection given by  $v\mapsto \{v\}$. Observe that in general for a morphism $\phi=(\phi_0,\phi_1) \colon \Gamma \to \Delta$ of graphs, the map $\phi_0$ does not determine $\phi_1$; however if $\Delta$ is simplicial then $\phi_0$ determines $\phi_1$. A morphism $(\phi_0,\phi_1)$   is a \emph{monomorphism} (also called an \emph{embedding}) if both maps are injective.

 Given a graph morphism $\phi=(\phi_0, \phi_1)\colon \Gamma \to \Delta$ and a subgraph $\Theta$ of $\Delta$, the \emph{preimage} $\phi^{-1}(\Theta)$ is the subgraph of $\Gamma$ with vertex set $\phi_0^{-1}(V(\Theta))$ and edge set $\phi_1^{-1}(V(\Theta)\cup E(\Theta))$.

Let $G$ be a group. A \emph{$G$-graph} is a graph $(V,E,r)$ where $V$ and $E$ are $G$-sets, and $r$ is a $G$-map with respect to the natural $G$-action on $[V]^2$ induced by the $G$-set $V$. A morphism $(\phi_0,\phi_1)$ of $G$-graphs is a morphism of graphs such that each $\phi_i$ is a $G$-map. A \emph{$G$-equivariant embedding} is a monomorphisms of $G$-graphs. A $G$-action on a graph $\Gamma$ \emph{has no  inversions} if for every $e\in E$ and $g\in G$ such that $g.e=e$, $g.v=v$ for every  $v\in r(e)$.  For a $G$-action without inversions on a graph $\Gamma$ and  $K\leq G$, let $\Gamma^K$ denote  subgraph of $\Gamma$ defined by  $V(\Gamma^K) =\{v\in V(\Gamma)\mid k.v=v \text{ for all $k\in K$}\}$ and $E(\Gamma^K) =\{e\in E(\Gamma)\mid k.e=e \text{ for all $e\in K$}\}$. 

\subsection{Extending group actions on graphs}

Let $K$ be a subgroup of $G$, and let $\Lambda=(V,E,r)$ be a $K$-graph.
Define \[ G\times_K \Lambda = (G\times_K V, G\times_K E, \tilde r)\] where $\tilde r$ is unique $G$-map induced by the commutative diagram
\[ \begin{tikzcd} 
E \arrow["r"]{d}\arrow["\jmath"]{r}& G\times_K E \arrow["\tilde r"']{d} \\
{[V]^2} \arrow["\imath"]{r} &  { [G\times_K V]^2}
\end{tikzcd}\]
where $\imath\colon V \hookrightarrow G\times_K V$ and $\jmath\colon E \hookrightarrow G\times_K E$ are the canonical $K$-maps, see  Lemma~\ref{lem:fromHsetstoGset}\eqref{2.1-universal}.  
 Note that  there is a \emph{canonical $K$-equivariant embedding}  \[ \Lambda \hookrightarrow G\times_K \Lambda\] induced by $\imath$ and $\jmath$. We consider $\Lambda$ a $K$-subgraph of $G\times_K \Lambda$.    
 \begin{remark}\label{rem:ExtendingAction}
Lemma~\ref{lem:fromHsetstoGset}, parts \eqref{2.1-stabilizers} and \eqref{2.1-disjunion} imply:
\begin{enumerate}
\item If  $\Lambda$ is a simplicial $K$-graph without inversions, then $G\times_K\Lambda$ is a simplicial $G$-graph without inversions.
 
 \item \label{rem:connected}
For any connected subgraph $\Delta$ of $G\times_K \Lambda$, there is $g\in G$ such that $g.\Delta$ is a subcomplex of $\Lambda$, in a commutative diagram, 
\[ \begin{tikzcd} 
G\times_K\Lambda \arrow["g"]{r} & G\times_K \Lambda \\
\Delta \arrow[hookrightarrow]{u} \arrow["g"]{r} &  \Lambda \arrow[hookrightarrow,]{u}
\end{tikzcd}
\]
In particular, if $\Lambda$ is connected, then every connected component of $G\times_K\Lambda$ is isomorphic to $\Lambda$.
\end{enumerate}
\end{remark}

\subsection{Pushouts of graphs}
Let $\mathsf X$ and $\mathsf Y$ be $G$-graphs, let $C\leq G$ be a subgroup and suppose $\mathsf X^C$ and $\mathsf Y^C$ are non-empty. Let $x\in \mathsf X^C$ and $y\in \mathsf Y^C$ be vertices. The\emph{  {$C$-pushout $\mathsf Z$ of $\mathsf X$ and $\mathsf Y$ with respect to the pair $(x,y)$}} is the $G$-graph $\mathsf Z$
obtained by taking the disjoint union of $\mathsf X$ and $\mathsf Y$ and then identifying the vertex $g.x$ with the vertex $g.y$ for every $g\in G$.

Equivalently, the $C$-pushout $\mathsf Z$ of $\mathsf X$ and $\mathsf Y$ with respect to the pair $(x,y)$ is the $G$-graph $\mathsf Z$
whose vertex set $V(Z)$ is the pushout of the $G$-maps $\kappa_1\colon G/C\to \mathsf V(\mathsf X)$ and $\kappa_2\colon G/C\to \mathsf V(\mathsf Y)$   given by $C\mapsto x$ and $C\mapsto y$; and edge set the disjoint union of the $G$-sets $E(\mathsf X)$ and $E(\mathsf Y)$, and attaching map $E(\mathsf Z) \to V(\mathsf Z)^2$ defined as the union of the attaching maps for $\mathsf X$ and $\mathsf Y$ postcomposed with the maps $V(\mathsf X) \to V(\mathsf Z)$ and $V(\mathsf Y) \to V(\mathsf Z)$ defining the pushout.
\[ 
  \begin{tikzcd} 
                    &   \mathsf X \arrow[rrd, bend left, "\jmath_1"] \arrow{rd}{\imath_1} & &  \\
G/C \arrow[ru, "\kappa_1"] \arrow[rd, "\kappa_2"' ]  &     & \mathsf Z\ar[r, dashed] & \mathsf W\\                    &  \mathsf Y\arrow[ ru, "\imath_2"'] \arrow[rru, bend right, "\jmath_2"'] & &
\end{tikzcd} \]
The standard universal property of pushouts holds for this construction: if $\jmath_1\colon \mathsf X \to \mathsf W$ and $\jmath_2\colon \mathsf Y \to \mathsf W$ are morphisms of $G$-graphs such that  $\jmath_1\circ \kappa_1 = \jmath_2\circ \kappa_2$, then there is a unique morphism of $G$-graphs $\mathsf Z\to \mathsf W$ such that above diagram commutes. 

\begin{remark}\label{rem:stabilizers} 
Let $\mathsf Z$ be the $C$-pushout of $\mathsf X$ and $\mathsf Y$ with respect to a pair $(x,y)$. \begin{enumerate}
\item For any vertex $x$ in $\mathsf X$,   $G_x = G_{\imath_1(x)}$ or $x$ is in the image of $\kappa_1$.  

\item For any edge $e$ of $X$, $G_e = G_{\imath_1(e)}$.

\item If  $\mathsf X / G$ and $\mathsf Y / G$ both have  finitely many     vertices (resp. edges), then $\mathsf Z / G$ has finitely many vertices (resp. edges). 
\end{enumerate}
\end{remark}

\begin{exmp}
Let $G=A\ast_C B$ where $A$ and $B$ are free abelian groups of rank two, and $C$ is maximal cyclic subgroup of $A$ and $B$. Let $\mathsf X$ be the $A$-graph consisting of a single vertex with the trivial $A$-action, and define $\msf Y$ analogously for $B$. Then the graph $G\times_{A} \mathsf X$ is the edgeless $G$-graph with vertex set the collection of left cosets of $G/A$; and analogously $G\times_{B} Y$ is the edgeless graph with vertex set $G/B$. Let $\msf Z$ be the $C$-pushout of $\msf X$ and $\msf Y$. By parts (4) and (7) of Proposition~\ref{lem:pushout},  $\msf Z$ is a connected edgeless $G$-graph and hence  a single vertex. (Note that the algebraic nature of $A$, $B$ and $C$ was not used in the argument).
\end{exmp}

\begin{exmp}
 Let $A=\langle a_1,a_2,a_3\mid [a_1,a_2] \rangle$ and $B=\langle b_1,b_2,b_3\mid [b_1,b_2] \rangle$, and let   $\mathsf X=\hat\Gamma(A,\langle a_1,a_2 \rangle, a_3)$ and $\mathsf Y=\hat\Gamma(B,\langle b_1,b_2 \rangle, b_3)$ be the coned-off Cayley graphs. Note that $\msf X$ is the Bass-Serre tree of the splitting of $A$ as the graph of groups 
  \[
\tikzfig{Hadi01}
\]
with two vertices and two edges with trivial edge group.

Let $G=A\ast_C B$ be the amalgamated product where $C$ corresponds to the cyclic subgroup $\langle a_1 \rangle \leq A$ and $\langle b_1 \rangle \leq B$. Consider the $C$-pushout $\msf Z$ of $G\times_A \msf X$ and $G\times_B \msf Y$. By the fourth, fifth and sixth statements of Proposition~\ref{lem:pushout} below, $\msf Z$ is a tree, it contains three distinct $G$-orbits of vertices, two of these $G$-orbits have all representatives with trivial stabilizer, and there is a  vertex $z$ with stabilizer $\langle a_1, a_2, b_2\rangle = \langle a_1, a_2\rangle \ast_{\langle a_1 \rangle=\langle b_1 \rangle} \langle b_1, b_2 \rangle$, and there are four  distinct orbits of edges all with representatives having trivial stabilizer. Hence $\msf Z$ is the Bass-Serre tree of a splitting of $G$ given by the graph of groups
\[
\tikzfig{Hadi02}
\]
with three vertices and four edges. In particular, $Z$ is the coned-off Cayley graph of $\hat\Gamma(G, G_y, \{a_3,b_3\})$. 
 \end{exmp}

\begin{proposition}\label{lem:pushout}
Let $G$ be the amalgamated free product group $A\ast_C B$, let  $\mathsf{X}$ be a $A$-graph, and let $\mathsf{Y}$ be a $B$-graph. Let $x\in \mathsf{X}^C$    and $y\in \mathsf{Y}^C$ be vertices. Let $\mathsf{Z}$ be the $C$-pushout of $G\times_{A} \mathsf{X}$ and $G\times_{B} \mathsf{Y}$  with respect to $( x, y)$. Let $z=\imath_1(x)=\imath_2(y)$. The following properties hold:
\begin{enumerate}
 \item The homomorphism $A_x \ast_C B_y \to G$ induced by the inclusions $A_x\leq G$ and $B_y\leq G$ is injective and has image $G_z$. In particular $G_z=\langle A_x, B_y \rangle$ is isomorphic to $A_x \ast_C B_y$.\label{married}

    \item The morphism   $\mathsf{X}\hookrightarrow G\times_A \mathsf{X} \xrightarrow{\imath_1} \mathsf{Z}$
    is an $A$-equivariant embedding. Analogously, 
    $\mathsf{Y}\hookrightarrow G\times_B \mathsf{Y} \xrightarrow{\imath_2} \mathsf{Z}$
    is a $B$-equivariant embedding. 
    
 \item[] From here on, we consider $\mathsf X$ and $\mathsf Y$ as subgraphs of $\mathsf Z$ via these canonical embeddings.

   \item For every vertex $v$ (resp. edge $e$) of $\mathsf Z$, there is $g\in G$ such that $g.v$  is a vertex (resp. is an edge $g.e$) of the subgraph $\mathsf X \cup \mathsf{Y}$. \label{transfer}

 \item For every vertex $v$   of $\mathsf{X}$ which is not in the $A$-orbit of $x$, $A_v = G_v$ where $G_v$ is the $G$-stabilizer of $v$ in $\mathsf Z$. Analogously for every vertex $v$ of  $\mathsf{Y}$ not in the $B$-orbit of $y$, $B_v=G_v$.\label{singles}

\item For every edge $e$ of  $\mathsf X$ (resp. $\mathsf Y$), $A_e=G_e$ (resp. $B_e=G_e$) where $G_e$ is the $G$-stabilizer of $e$ in $\mathsf Z$ .\label{edges}

    \item If the complexes $\mathsf X / A$ and $\mathsf Y / B$ both have  finitely many 
    vertices (resp. edges), then $\mathsf Z / G$ has finitely many vertices (resp. edges).\label{cocompact}

  \item If $\mathsf{X}$ and $\mathsf{Y}$ are connected, then $\mathsf{Z}$ is connected.\label{connected}

    \item  \label{cutpoint}
  There is a  $G$-tree $T$ and a morphism $\xi\colon \mathsf Z \to T$ of $G$-graphs with the following properties:
    The $G$-orbit of $\xi(z)$ and its complement in the set of vertices of $T$ make $T$ a bipartite graph;  
    the preimage $\xi^{-1}(\xi(z))$ is a single vertex; and if a vertex $v$ of $T$ is not in the $G$-orbit of $\xi(z)$, then the preimage of the star of $v$ is a subgraph of $\mathsf Z$ isomorphic to $\mathsf X$ or $\mathsf Y$.  
\end{enumerate}
\end{proposition}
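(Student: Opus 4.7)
The plan is to handle the vertex and edge sets of $\mathsf Z$ separately using the set-theoretic pushout machinery of Section~4.1, and then deduce the graph-theoretic conclusions via a Bass--Serre style argument. Explicitly, $V(\mathsf Z)$ is the pushout of the $G$-maps $G/C \to V(G\times_A \mathsf X)$ and $G/C \to V(G\times_B \mathsf Y)$ sending $C$ to $\imath_1(x)$ and $\imath_2(y)$ respectively, while $E(\mathsf Z)=E(G\times_A\mathsf X)\sqcup E(G\times_B\mathsf Y)$ as a $G$-set, with attaching map induced by the originals postcomposed with the pushout maps on vertices.

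With this setup most of the statements reduce to set-theoretic computations. For (2), the canonical $A$-map $\mathsf X\hookrightarrow G\times_A \mathsf X$ is injective by Lemma~\ref{lem:fromHsetstoGset}, and its composition with $\imath_1$ remains injective because the pushout only identifies points in the $G$-orbit of $\imath_1(x)$ with their counterparts in $G\times_B \mathsf Y$. Statement (3) holds since every $G$-orbit in $G\times_A\mathsf X$ meets $\mathsf X$ by construction. For (4) and (5), vertices and edges outside the orbit of the cut-vertex data are untouched by the pushout identification, so their $G$-stabilizer in $\mathsf Z$ agrees with their $G$-stabilizer in $G\times_A\mathsf X$, which equals the $A$-stabilizer by Lemma~\ref{lem:fromHsetstoGset}(2). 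Statement (6) follows from the orbit bijection of Lemma~\ref{lem:fromHsetstoGset}(1) together with the observation that the pushout identifications merge only two orbits of vertices into one.

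For (1), I apply Proposition~\ref{lem:pushout-set} to the vertex pushout with $s=\imath_1(x)$ and $t=\imath_2(y)$: Lemma~\ref{lem:fromHsetstoGset}(2) gives $G_s = A_x$ and $G_t = B_y$, hence $G_z = \langle A_x, B_y\rangle$. The inclusions $A_x\hookrightarrow G$ and $B_y\hookrightarrow G$ agree on $C$ (since $x\in \mathsf X^C$ and $y\in\mathsf Y^C$ imply $C\leq A_x\cap B_y$), so they induce a homomorphism $A_x *_C B_y \to G$ with image $\langle A_x, B_y\rangle$. Injectivity then comes from the normal-form theorem for amalgamated products: $A_x \cap B_y \subseteq A \cap B = C$ inside $A *_C B$, so $A_x \cap B_y = C$ and the subgroup $\langle A_x, B_y\rangle \leq G$ is canonically isomorphic to $A_x *_C B_y$.

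The genuinely graph-theoretic statements are (7)--(9). The plan is to use the Bass--Serre tree $T$ of $A *_C B$ and construct a $G$-equivariant morphism $\pi\colon \mathsf Z \to T$ collapsing each translate $g\mathsf X$ (resp.\ $g\mathsf Y$) to its corresponding vertex of $T$, while sending the cut-vertex orbit $G.z$ to the midpoints of edges of $T$. Then (7) follows because $T$ is connected, each fibre of $\pi$ is a connected translate of $\mathsf X$ or $\mathsf Y$, and adjacent fibres share a cut-vertex in $G.z$. For (8), removing $G.z$ disconnects fibres over distinct edges of $T$, so the closure of every component of $\mathsf Z\setminus G.z$ is exactly one translate $g\mathsf X$ or $g\mathsf Y$. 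Finally for (9), the extensions $G\times_A\mathsf X$ and $G\times_B\mathsf Y$ inherit simpliciality (as noted following the definition of $G\times_K \Lambda$), and the pushout merely identifies single vertices across disjoint components, so no loops or multiple edges arise. The main obstacle I anticipate is making the Bass--Serre projection $\pi$ rigorous and verifying that its fibres are exactly the translates of $\mathsf X$ and $\mathsf Y$; the alternative of a direct normal-form argument in $A *_C B$ pushing $z$ across generators works but gets notationally heavy.
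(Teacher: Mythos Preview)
Your overall strategy matches the paper's closely, but there are two genuine gaps.

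\textbf{Item (2) depends on item (1).} Your justification that ``the pushout only identifies points in the $G$-orbit of $\imath_1(x)$ with their counterparts in $G\times_B\mathsf Y$'' is not enough: a pushout can also identify two points \emph{within} $G\times_A\mathsf X$ via a zigzag through $G\times_B\mathsf Y$. Concretely, you must rule out that distinct $a_1.x,\,a_2.x\in\mathsf X$ get identified in $\mathsf Z$. The paper handles this by first establishing (1), so that if $a.x$ and $x$ have the same image $z$ then $a\in G_z=A_x\ast_C B_y$; since $a\in A$ this forces $a\in A_x$, whence $a.x=x$. You have the ingredients (you prove (1) later), but your ordering and your one-line justification for (2) hide a real step.

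\textbf{Item (8): the Bass--Serre tree of $A\ast_C B$ does not receive the map you want.} You propose sending $G.z$ to the midpoints of edges of $T$, but the $G$-stabilizer of such a midpoint is (a conjugate of) $C$, whereas $G_z=\langle A_x,B_y\rangle$ can be strictly larger than $C$. So your map $\pi$ is not $G$-equivariant in general, and the obstacle you anticipated is fatal for this particular tree. The paper fixes this by passing to the refined splitting
\[
G \;=\; A \ast_{A_x} \bigl(A_x\ast_C B_y\bigr) \ast_{B_y} B,
\]
whose Bass--Serre tree $T$ has a vertex with stabilizer exactly $G_z=A_x\ast_C B_y$. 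The universal property of the pushout then produces a $G$-map $\xi\colon\mathsf Z\to T$, and one checks $\xi^{-1}(\xi(z))=\{z\}$, giving the cut-vertex statement. Incidentally, the paper proves connectedness (7) directly by a normal-form path argument (writing $g=a_1b_1\cdots a_nb_n$ and concatenating paths inside successive translates of $\mathsf X$ and $\mathsf Y$), rather than via the tree; this avoids having to first verify that fibres of $\xi$ are translates of $\mathsf X$ or $\mathsf Y$. Once (8) is in hand, (9) follows immediately, which is cleaner than your direct argument (the phrase ``identifies single vertices across disjoint components'' is not literally true, since for instance $b.x$ and $x$ are identified in $\mathsf Z$ for $b\in B_y\setminus A$).
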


\begin{proof}
The first item is a direct consequence of Proposition~\ref{lem:pushout-set}. For the second item,  first note that  that the composition   $\mathsf X\hookrightarrow G\times_A \mathsf X \xrightarrow{\imath_{1}} \mathsf Z$ is a morphism of $A$-graphs. Observe that to prove the embedding part  is enough to consider only vertices of $\mathsf X$ that are in the $A$-orbit of $x$. Suppose that $a.x$ and $x$ with $a\in A$ both map to $z \in \mathsf Z$. Then $a \in G_{z} = A_{x} \ast_C B_y$ and therefore $a\in A_x$ and hence $a.x=x$.   Item three follows directly from the definition of $\mathsf Z$, and items four to six are consequences of  Proposition~\ref{lem:fromHsetstoGset}. 

To prove the seventh statement suppose that $X$ and $Y$ are connected graphs.  The subgraph $\mathsf X\cup \mathsf Y$ of $\mathsf Z$ is connected since both $\mathsf X$ and $\mathsf Y$  contain the vertex $z$. On the other hand, any vertex of $\mathsf Z$ belongs to a translate of $\mathsf X \cup \mathsf Y$ by an element of $G$. Therefore to prove that $\mathsf Z$ is connected,  it is enough to show that for any $g\in G$ there is a path in $\mathsf Z$ from $z$ to $g.z$.  For any $g\in G$ and $a\in A$, there is a path from $g.z$ to $ga.z$ in $\mathsf Z$: indeed, there is a path from $z$ to $a.z$ in the connected $A$-subgraph $\mathsf X$ of $\mathsf Z$, and hence there is a path from $g.z$ to $ga.z$ in $\msf Z$. Analogously, for any $g\in G$ and $b\in B$, there is a path from $g.z$ to $gb.z$. Since any element of $G$ is of the form $a_1b_1\dots a_nb_n$ with $a_i\in A$ and $b_i \in B$, there is a path from $z$ to $g.z$ for any $g\in G$.

Now we prove the eighth statement. Observe that $G$ splits as $G=A\ast_{A_x}(A_x\ast_CB_y)\ast_{B_y}B$ where the subgroups $A_x$, $B_y$ and $A_x\ast_C B_y$ are naturally identified with the $G$-stabilizers of $x\in G\times_A \msf X$, $y\in G\times_B \msf Y$, and $z\in \msf Z$. Let $T$ denote the Bass-Serre tree of this splitting. The vertex and edge sets of $T$ can be described as 
\[ V(T) = G/A \sqcup G/(A_x\ast_C B_y)  \sqcup G/B \]
and 
\[ E(T) = \{ \{ gA, g(A_x\ast_C B_y) \} \mid g\in G \} \sqcup \{ \{g(A_x\ast_C B_y), gB\} \mid g\in G \}\]
respectively. Note that $T$ is a bipartite $G$-graph, the equivariant bipartition of the vertices given by $G/A\sqcup G/B$ and $G/(A_x\ast_CB_y)$. 

Consider the $A$-map from $\mathsf X$ to $T$ that maps every vertex of $\mathsf X$ not in the $A$-orbit of $x$ to the vertex $A$, and $x\mapsto A_x\ast_C B_y$. Since $T$ is simplicial, this induces a unique morphism of $G$-graphs $\jmath_1\colon G\times_A \mathsf X \to T$. Analogously, there is $B$-map $\mathsf Y\to T$ that maps every vertex not in the $B$-orbit of $y$ to the vertex $B$ and $y\mapsto A_x\ast_C B_y$; this induces a unique $G$-map $\jmath_2\colon G\times_B \mathsf Y \to T$. 
\[ 
  \begin{tikzcd} 
                    &   G\times_A X \arrow[rrd, bend left, "\jmath_1"] \arrow{rd}{\imath_1} & &  \\
G/C \arrow[ru, "\kappa_1"] \arrow[rd, "\kappa_2"' ]  &     & Z\ar[r, dashed, "\xi"] & T\\                    &  G\times_B Y\arrow[ ru, "\imath_2"'] \arrow[rru, bend right, "\jmath_2"'] & &
\end{tikzcd} \] 

Consider the $G$-maps $\kappa_1 \colon G/C \to G\times_A \mathsf X$ and $\kappa_2\colon G/C \to G\times_B \mathsf Y$ given by $C\mapsto x$ and $C\mapsto y$ respectively. Since $\jmath_1\circ \kappa_1 = \jmath_2\circ \kappa_2$,  the universal property implies that there is a surjective $G$-map $\xi\colon \mathsf Z\to T$. 

Note that $\xi^{-1}(\xi(z))$ is contained in the orbit $G.z$. Suppose $g.z\in \xi^{-1}(\xi(z))$. Then $g(A_x\ast_C B_y) = A_x\ast_C B_y$ and hence $g\in A_x\ast_C B_y$. Since $A_x\ast_C B_y$ is the $G$-stabilizer of $z$, we have that $g.z=z$. This shows that $\xi^{-1}(\xi(z))=\{z\}$.

Let us conclude by proving that if 
$v\in V(T)$ is not in the $G$-orbit of $\xi(z)=A_x\ast_c B_y$ then $\xi^{-1}(\mathsf{star_T}(v))$ is a graph isomorphic to either $\mathsf X$ or $\mathsf{Y}$. Note that   such a vertex $v$ is an element of $G/A \cup G/B$. By equivariance, it is enough to consider the two symmetric cases, namely $v=A$ or $v=B$. Let us prove that $\xi^{-1}(\mathsf{star}_T{A})$ is isomorphic to $\mathsf X$. 
Observe that any edge of  $\mathsf{star}_T(A)$ is of the form $\{A, a(A_x\ast_C B_y)\}$ with $a\in A$. Since 
$(\xi\circ \imath_1)^{-1}(\mathsf{star}_T(A)) = \jmath_1^{-1}(\mathsf{star}_T(A))$ and 
$\xi^{-1}(\mathsf{star}_T(A))  \subset \imath_1(G\times_A X)$, we have that  $\xi^{-1}(\mathsf{star}_T(A)) = \imath_1(\jmath_1^{-1}(\mathsf{star}_T(A))$.
Recall that the canonical $A$-map  $\mathsf{X} \to G\times_A \mathsf{X}$ is injective and this defines a natural identification of $\mathsf{X}$ with a subgraph of $G\times_A \mathsf{X}$ which equals $\jmath^{-1}(\mathsf{star}_T(A))$ by definition of $\jmath$. Then we have that  $\imath_1(\jmath_1^{-1}(\mathsf{star}_T(A))=\imath_1(\mathsf{X})$ is isomorphic to $\mathsf X$ by the second item of the proposition.
\end{proof}

\subsection{Proof of Theorem~\ref{thm:CombinationFine}}

\begin{lemma}\label{lem:FineAndHyp}
Let $\xi\colon \Gamma \to T$ be a morphism of graphs where 
$T$ is a bipartite tree, say $V(T)=K \cup L$. Suppose $\xi^{-1}(v)$ is a single vertex for every  $v\in L$, and $\xi^{-1}(\mathsf{star}(v))$ is a connected subgraph for every $v\in K$.  Let $\Omega = \{ \xi^{-1}(\mathsf{star}(v)) \mid v\in K  \}$. Then:  
\begin{enumerate}
\item \label{lem:simplicial} $\Gamma$ is a simplicial graph if and only if every $\Delta\in \Omega$ is simplicial.
    
    \item $\Gamma$ is a $\delta$-hyperbolic graph if and only if  every $\Delta\in \Omega$ is a $\delta$-hyperbolic graph.
    \item For any vertex $u$  of $\Gamma$, the following statements are equivalent:
    \begin{itemize}
        \item     $\Gamma$ is fine at $u$.
        \item For every $\Delta \in \Omega$, if $u$ is a vertex of $\Delta$, then $\Delta$ is fine at $u$.
    \end{itemize}
\end{enumerate}
\end{lemma}
\begin{proof}
Note for any vertex $u$ of $\Gamma$, if $\xi(u)\in L$ then $u$ is a cut vertex of $\Gamma$. The bipartite assumption on $T$ implies that if $\Theta$ is the closure of a connected component of $\Gamma\setminus \xi^{-1}(L)$, then $\Theta$ equals some $\Delta \in \Omega$.    

The first and second statements follow from the previous observation, the second one with important   generalizations~\cite{BeFe92}.  For the third  statement, if $\Gamma$ is fine at $u$, then any subgraph containing $u$ is fine at $u$. Conversely, let $u$ be a vertex of $\Gamma$ such that any $\Delta$ containing $u$ is fine at $u$. There are two cases to consider.  

Suppose that $\xi(u) \in K$. Then   there is a unique $\Delta\in\Omega$ that contains $u$. The bipartite assumption implies that $T_u \Delta = T_u\Gamma$. Since every vertex of $\Gamma$ that maps to $L$ disconnects $\Gamma$,  the metric spaces $(T_u \Delta, \angle_u)$ and $(T_u \Gamma, \angle_u)$ coincide. Since $\Delta$ is fine at $u$, then $\Gamma$ is fine at $u$. 

Suppose that $\xi(u)\in L$.  Observe  if $x,y\in T_u\Gamma$ and $x$ and $y$ belong to different subgraphs in $\Omega$, then $\angle_u(x,y)=\infty$. 
Therefore, for any $x\in T_u\Gamma$,  every ball of finite radius in $T_u\Gamma$ centered at $x$ is a ball of finite radius in $T_u \Delta$ centered at $x$ for some $\Delta$. Since by assumption, $\Delta$ is fine at $v$, every ball of finite radius in $T_v\Gamma$ centered at $x$ is finite. 
\end{proof}

\begin{proof}[Proof of Theorem~\ref{thm:CombinationFine}]
 Let $\Gamma$ be the $C$-pushout of the $G$-graphs  $G\times_{G_1} \Gamma_1$
and $G\times_{G_2} \Gamma_2$ with respect to $(x_1,x_2)$, and let $z$ be the image of $x_1$ in $\Gamma$. The first six properties of $\Gamma$ are direct corollaries of  Proposition~\ref{lem:pushout}. The last four properties follow directly by invoking   Proposition~\ref{lem:pushout}\eqref{cutpoint} and  Lemma~\ref{lem:FineAndHyp}.
\end{proof}

\section{HNN-Extensions}\label{sec:05}

This section describes a proof of Theorem~\ref{thm:HNNFine}. The argument is analogous to  the one proving Theorem~\ref{thm:CombinationFine}. In this case, we need to construct a {$G\ast_\varphi$-graph} from a given $G$-graph that we call the $\varphi$-coalescence.

 \begin{definition}\label{defn-coalescence}(Coalescence in sets)
 Let $H$ be a subgroup of a group $A$,  let $\varphi\colon H\to A$ be a monomorphism and let $G$ be the HNN-extension
 \[ G=A\ast_{\varphi}=\langle G, t\mid  t c t^{-1} =\varphi(c)~\text{for all $c\in C$} \rangle .\]  Let $ X$ be an $A$-set, $x\in X^H$ and $y\in  X^{\varphi(H)}$. The {\emph{ $\varphi$-Coalescence of $X$ with respect to $(x,y)$}} is the $G$-set $Z$ arising as quotient  of $G\times_A  X$ by the equivalence relation generated by the set of basic relations
 \[ \mathcal{B}=\{ (gt.x, g.y) \mid  \text{ $g\in G$} \}. \] 
 Note that the quotient map \[ \rho\colon G\times_A X \to Z \] is $G$-equivariant. 
\end{definition}

\begin{exmp}
Let $\varphi\colon A\to A$ be a group automorphism and consider the HNN-extension $G=A\ast_\varphi$. Let $X$ be the $A$-set consisting of a single point. Then $G\times_A X$ is the $G$-space $G/A$, and then the $\varphi$-Coalescence of $X$ is again a single point.
\end{exmp}

\begin{exmp}
Consider a free product $A=H_1\ast H_2$.  Let $\varphi\colon H_1 \to H_2$ be an isomorphism, and $G=A\ast_\varphi$. Let  $X$ be the $A$-set  $A/H_1 \cup A/H_2$ of all left cosets of $H_1$ and $H_2$ in $A$. Then $G\times_A X$ is the $G$-set of left cosets $G/H_1 \cup G/H_2$. The  $\varphi$-coalescence $Z$ of $ X$ with respect to the pair $(H_1, H_2)$ is the quotient $G\times_A  X$ by identifying $gtH_1$ and $gH_2$ for every $g\in G$. Hence $Z$ is naturally isomorphic as a $G$-set to $G/H_1$. Observe that the $A$-map $X \to Z$ given by $H_1\mapsto H_1$ and $\varphi(H_1)\mapsto tH_1$ is an injective $A$-map.
\end{exmp}

\begin{lemma}\label{basic chain}
Let $H$ be a subgroup of a group $A$,  let $\varphi\colon H\to A$ be a monomorphism and let $G=A\ast_{\varphi}$.  Let $ X$ be an $A$-set, let $x,y\in X$ be in different $A$-orbits such that $A_x=H$,   $A_y=\varphi(H)$.   If
$Z$ is the $\varphi$-Coalescence of $ X$ with respect to $(x,y)$, and $z=\rho(y)$, then:
\begin{enumerate}
    \item the $G$-stabilizer $G_z$ equals $\varphi(H)$, and  \item the $A$-map $\jmath\colon  X \to  Z$ defined by the commutative diagram
\[ \begin{tikzcd} 
 & G\times_A  X \arrow["\rho"]{rd} &  \\
 X \arrow["\jmath"]{rr} \arrow["\imath", hookrightarrow]{ru}  & &  Z, 
\end{tikzcd}
\]
is injective.
\end{enumerate}
\end{lemma}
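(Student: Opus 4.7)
Write $x' = \imath(x)$ and $y' = \imath(y)$. The plan is to describe the equivalence relation $\sim$ on $G\times_A X$ explicitly by analyzing the graph whose edges are the basic identifications $gt.x' \sim g.y'$ for $g \in G$, and then to read off both conclusions from this description. Each such edge has one endpoint in $G.x'$ and the other in $G.y'$, and by Proposition~\ref{lem:fromHsetstoGset}\eqref{2.1-orbits} the orbits $G.x'$ and $G.y'$ are disjoint because $x$ and $y$ lie in different $A$-orbits. Hence every $w\in G\times_A X$ outside $G.x'\cup G.y'$ has singleton equivalence class, while the classes inside $G.x'\cup G.y'$ are the connected components of a bipartite graph.

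To compute the class of a vertex $g.x'$, note that by Proposition~\ref{lem:fromHsetstoGset}\eqref{2.1-stabilizers} we have $G_{x'}=H$ and $G_{y'}=\varphi(H)=tHt^{-1}$. The edges incident to $g.x'$ are indexed by $g'\in G$ with $g't.x'=g.x'$, equivalently $g'\in gHt^{-1}$; for any such $g'=ght^{-1}$ the other endpoint $g'.y'=gt^{-1}.y'$ is independent of $h$, since $(gt^{-1})^{-1}(ght^{-1})=tht^{-1}\in \varphi(H)=G_{y'}$. Thus the equivalence class of $g.x'$ is exactly $\{g.x',\, gt^{-1}.y'\}$, and symmetrically the class of $g.y'$ is $\{g.y',\, gt.x'\}$; each class meets each of the two orbits in a single point.

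Part (1) then follows immediately. Setting $g=1$ gives $\rho^{-1}(z)=\{y', t.x'\}$, so $\varphi(H)=G_{y'}\leq G_z$. Conversely, if $g\in G_z$, then $g.y'\in \rho^{-1}(z)=\{y',t.x'\}$; since $g.y'\in G.y'$ while $t.x'\in G.x'$ and these orbits are disjoint, we must have $g.y'=y'$, hence $g\in \varphi(H)$.

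For part (2), since $\imath$ is injective it suffices to rule out identifications between distinct images $\imath(x_1)\neq \imath(x_2)$. Split on which of the sets $A.x$, $A.y$, or $X\setminus(A.x\cup A.y)$ contains each $x_i$. If either $x_i$ lies outside $A.x\cup A.y$ its class is a singleton, forcing $\imath(x_1)=\imath(x_2)$. If both lie in $A.x$ (or both in $A.y$), the two-element description forces $\imath(x_1)=\imath(x_2)$ because the other element of the class lies in the opposite $G$-orbit. The only remaining case is, say, $\imath(x_1)=a_1.x'$ and $\imath(x_2)=a_2.y'$ with $a_1,a_2\in A$; then $a_2.y'=a_1t^{-1}.y'$ rearranges to $t\in \varphi(H)\cdot a_2^{-1}a_1\subseteq A$. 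This contradicts $t\notin A$, which holds in every HNN-extension by Britton's Lemma. The main obstacle in the argument is precisely this crossover case; the remaining cases are immediate from the two-element description of the classes.
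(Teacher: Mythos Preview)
Your proof is correct and rests on the same core computation as the paper's (both analyze the equivalence relation generated by $gt.x'\sim g.y'$ via the identity $tHt^{-1}=\varphi(H)$), but the organization is different and somewhat cleaner. The paper, for part~(1), follows an arbitrary chain $g.y\sim g_2.x\sim g_3.y\sim\cdots\sim y$ of basic relations and telescopes the resulting product to conclude $g\in\varphi(H)$; it then feeds part~(1) back into part~(2), treating injectivity on $A.x$ and on $A.y$ separately. You instead front-load the structural work: by checking that every vertex in the bipartite relation graph has a unique neighbor, you identify the equivalence classes inside $G.x'\cup G.y'$ as exactly the pairs $\{g.x',\,gt^{-1}.y'\}$, after which both conclusions are immediate. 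Your route also handles the crossover case $a_1.x\sim a_2.y$ (forcing $t\in A$, a contradiction by Britton's Lemma) explicitly; the paper's proof only verifies injectivity on $A.x$ and $A.y$ separately and leaves this mixed case unaddressed.
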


\begin{proof}
The inclusion $\varphi(H) \subseteq G_z$ is a consequence of $\rho$ being  $G$-equivariant, $\varphi(H)=A_y$ and $\rho(y)=z$. Conversely, let $g\in G_z$. Then $g.y\sim y$ in $G\times_A X$, and it follows that there is an integer $n\geq1$ and a sequence $w_1, w_2,\ldots, w_n$ of elements of $G\times_A X$ such that $g.y=w_1$, $w_n=y$ and $w_i$ and $w_{i+1}$  are the components of a basic relation (see the definition of coalescence). Since $x$ and $y$ are in  different $A$-orbits in $X$, they represent different $G$-orbits in $G\times_A X$, see the first item of  Proposition~\ref{lem:fromHsetstoGset}.
Hence, we have that $w_i=g_i.x$ if $i$ is even, and $w_i=g_i.y$ if $i$ is odd, for some elements $g_i$ of $G$ where $g_1=g$ and $g_n\in\phi(H)$. Note that the integer $n$ is odd, and the  {chain of basic relations between the $w_i$'s} in $G\times _A X$ can be expressed as
\[ g_1.y \sim g_2.x \sim g_3.y\sim g_4.x \sim \dots \sim g_{n-1}.x \sim g_n.y.\]
By definition of basic relation,  $tg_2^{-1}g_1\in \varphi(H)$, $tg_2^{-1}g_3\in \varphi(H)$, $tg_4^{-1}g_3\in \varphi(H)$, $tg_4^{-1}g_5\in \varphi(H)$ and so on until $tg_{n-1}^{-1}g_n\in \varphi(H)$. Since $n$ is odd, we have that 
\[g_1^{-1}g_n=(tg_2^{-1}g)^{-1}(tg_2^{-1}g_3)(tg_4^{-1}g_3)^{-1}(tg_4^{-1}g_5)\dots  (tg_{n-1}^{-1}g_n)\in \varphi(H),\]
which implies $g=g_1\in  \varphi(H)$. This shows that $G_z=\varphi(H)$
 
Now we prove the second statement. By Lemma~\ref{lem:fromHsetstoGset}, the natural $A$-map $ X \to G\times_A X$ is injective. Observe that $Z$ is obtained as a quotient of $G\times_A  X$ by the $G$-equivariant equivalence relation generated by the \emph{basic relation} $t.x \sim y$. 
Hence to prove injectivity of
$X\to G\times_A  X \to Z$, it  is enough to show that
the restriction to $A.x \cup A.y$ is injective. 
Assume there are $a_1,a_2\in A$ such that $a_1.x$ and $a_2.x$ map to the same element in $ Z$. 
Then letting $a=a_2^{-1}a_1$, both $a.x$ and $x$ map to the same element in $ Z$. 
Hence  $a.x\sim x$ which implies that $at^{-1}.y\sim t^{-1}.y$. Therefore $(ta^{-1}t^{-1}).y\sim y$ and thus by the first statement, $ta^{-1}t^{-1}\in \varphi(H)$, and hence $a^{-1}\in t^{-1}\varphi(H)t=H$. 
This results in $a\in H$. 
Therefore $a_2^{-1}a_1\in H$ and $a_1.x=a_2.x$.
We have shown that the restriction $A.x\to  Z$ is injective. With a similar argument one can show that $A.y \to Z$ is also injective. 
\end{proof}

\begin{definition}\label{defn-coalescence2}(Coalescence in graphs)
Let $H$ be a subgroup of a group $A$,  let $\varphi\colon H\to A$ be a monomorphism, and let $G=A\ast_{\varphi}$.  Let $\msf X$ be an $A$-graph, let $x,y\in V(\msf X)$  such that $x\in X^H$ and $y\in  X^{\varphi(H)}$.  The \emph{$\varphi$-Coalescence $\msf Z$ of $\msf X$ with respect to $(x,y)$} is the $G$-graph with vertex set  the $\varphi$-Coalescence of the $A$-set $V(\msf X)$ with respect to $(x,y)$, edge set $G\times_A E(\msf X)$, and attaching map $E(\msf Z) \to [V(Z)]^2$ defined as the composition
\[ 
  \begin{tikzcd} 
  E(\msf X) \arrow[r]\arrow[d] & {[V(\msf X)]}^2 \arrow[d] \\
  E(\msf Z)=G\times_A E(\msf X) \arrow[r] \arrow[rd]   & G\times_A [V(\msf X)]^2 \arrow[d]\\
&  {[V(\msf Z)]}^2 
\end{tikzcd} \]
where the horizontal middle arrows are induced by the attaching map $E(\msf X) \to [V(\msf X)]^2$ (see Lemma~\ref{lem:fromHsetstoGset}\eqref{2.1-universal}) and the the bottom vertical map is induced by the quotient map $G\times_AV(X) \to V(Z)$. Let $\rho\colon G\times_A \msf X \to \msf Z$ denote the induced $G$-morphism.  
\end{definition}  

 \begin{remark}[Equivalent definition of coalescence]
Equivalently, the $\varphi$-Coalescence  $\mathsf{Z}$ of the $A$-graph $\msf X$ with respect to $(x,y)$ is the quotient $\msf Z$ of the $G$-graph  $G\times_A \msf X$ by the equivalence relation generated by $gt.x \sim g.y$ for all $g\in G$.
\end{remark}

\begin{proposition}\label{lem:HNN}
Let $H$ be a subgroup of a group $A$,  let $\varphi\colon H\to A$ be a monomorphism and let $G=A\ast_{\varphi}$.  Let $\msf X$ be an $A$-graph, let $x,y\in \msf X$ in different $A$-orbits such that $A_x=H$,   $A_y=\varphi(H)$.   If
$\msf Z$ is the $\varphi$-Coalescence of $\msf X$ with respect to $(x,y)$, and $z=\rho(y)$,
 then the following properties hold:

\begin{enumerate}
 \item  $G_z=\varphi(H)$.\label{identified vertices-HNNN}

    \item The map  $\mathsf{X}\hookrightarrow G\times_A \mathsf{X} \xrightarrow{} \mathsf{Z}$
    is an $A$-equivariant embedding.
    
 \item[] From here on, we consider $\mathsf X$  as a subgraph of $\mathsf Z$ via this canonical embedding.

 \item For every vertex $v$ (resp. edge $e$) of $\mathsf Z$, there is $g\in G$ such that $g.v$  is a vertex (resp. is an edge $g.e$) of  $\mathsf X$. \label{transfer-HNN-}
 
\item For every vertex $v$   of $\mathsf{X}$ which is not in the $A$-orbit of $x$, $A_v = G_v$ where $G_v$ is the $G$-stabilizer of $v$ in $\mathsf Z$.

\item For every edge $e$ of  $\mathsf X$  $A_e=G_e$  where $G_e$ is the $G$-stabilizer of $e$ in $\mathsf Z$.\label{vertex and edges}

\item If the complex  $\mathsf X / A$ has  finitely many 
    vertices (resp. edges), then $\mathsf Z / G$ has finitely many vertices (resp. edges).\label{cocompact-HNN}

  \item If $\mathsf{X}$  is connected, then $\mathsf{Z}$ is connected.\label{connected-HNN}

    \item   There is a  $G$-tree $T$ and a morphism $\xi\colon \mathsf Z \to T$ of $G$-graphs with the following properties:
    The $G$-orbit of $\xi(z)$ and its complement in the set of vertices of $T$ make $T$ a bipartite $G$-graph;  
    the preimage $\xi^{-1}(\xi(z))$ is a single vertex; and if a vertex $v$ of $T$ is not in the $G$-orbit of $\xi(z)$, then the preimage of the star of $v$ is a subgraph of $\mathsf Z$ isomorphic to $\mathsf X$. \label{cutpoint-HNN}
 \end{enumerate}
\end{proposition}
The following argument is analogous to the proof of Proposition~\ref{lem:pushout}.
\begin{proof}
 The first and second statements are direct consequences of Lemma~\ref{basic chain} when considering $V(\msf X)$ and $E(\msf X)$  as $A$-sets. 
Items three to six  follow directly from the definition of $\mathsf Z$ and Proposition~\ref{lem:fromHsetstoGset}.    

Suppose $\mathsf X$ is connected. By Proposition~\ref{lem:fromHsetstoGset}, the graph $G\times_A \msf X$ is a  disjoint union of copies of the connected subgraph $\msf X$, and hence any element in $\msf Z$ belongs to a translate of $\msf X$ by an element of $G$. Therefore, to prove that $\msf Z$ is connected, it is sufficient to show that for any $g\in G$ there is a path in $\msf Z$ from $z$ to $g.z$.

First observe that if there is a path from $z$ to $g.z$, then there is a path from $z$ to $gt.z$. Indeed, there is a path from $x$ to $y$ in the connected subgraph $\msf X$ of $G\times_A \msf X$, and hence there is a path from $z=\rho(t.x)$ to $t.z=\rho(t.y)$ in $\msf Z$. Therefore, there is a path from $g.z$ to $gt.z$ in $\msf Z$, and in particular a path from $z$ to $gt.z$.

Now observe that if there is a path from $z$ to $g.z$, then there is a path from $z$ to $ga.z$ for any $a\in A$. Indeed,  there is a path from $z$ to $a.z$ in the connected $A$-subgraph $\mathsf X$ of $\mathsf Z$. Hence,  there is a path from $g.z$ to $ga.z$ in $\msf Z$, and in particular a path from $z$ to $ga.z$.

To conclude, any $g\in G$  is a product of the form $g=a_1t^{\theta_1}a_2t^{\theta_2}\dots a_nt^{\theta_n}a_{n+1}$ with $a_i\in A$ and $\theta_i=\pm 1$. Therefore, an induction argument using the two previous statements shows that there is a path from $z$ to $g.z$ in $\msf Z$ for every $g\in G$.

Now we prove the eighth statement. Consider the barycentric subdivision $T$ of the Bass-Serre tree  of the splitting $G\ast_{\varphi}$. Specifically, $T$ is the tree with vertex set
\[ V(T) = G/A \sqcup  G/H \]
edge set
\[ E(T) = \{ \{ gA, gtH \} \mid g\in G \} \sqcup \{ \{gA, gH\} \mid g\in G \}.\] 
Note that all the edges of $T$ are $G$-translates of the following two edges attached at the vertex $tH$,
\[
\begin{tikzpicture}
	\begin{pgfonlayer}{nodelayer}
		\node [circle,fill,inner sep=1pt] (0) at (-7, 0) {};
		\node [circle,fill,inner sep=1pt] (1) at (-4, 0) {};
		\node [style=none] (2) at (-7, 0.4) {$A$};
		\node [style=none] (3) at (-5.5, 0.75) {$\{A,tH\}$};
		\node [style=none] (5) at (-4, 0.4) {$tH$};
		\node [style=none] (6) at (-8.25, 0) {};
		\node [circle,fill,inner sep=1pt] (7) at (-1, 0) {};
		\node [style=none] (8) at (0.25, 0) {};
		\node [style=none] (9) at (-1, 0.4) {$tA$};
		\node [style=none] (10) at (-2.25, 0.75) {$\{tH, tA\}$};
	\end{pgfonlayer}
	\begin{pgfonlayer}{edgelayer}
		\draw (0) to (1);
		
		\draw (1) to (7);
	
	\end{pgfonlayer}
\end{tikzpicture} \]
Suppose that the $A$-set
$V(\msf X)=\left(\bigsqcup_{i\in  I}A/A_i\right) \sqcup A/H \sqcup A/\varphi(H)$. Then
\[V(G\times_A \msf X)=\big(\bigsqcup_{i\in I }G/A_i\big)\sqcup G/H\sqcup G/\varphi(H).\]
Since $T$ is a simplicial graph,   there is an induced  $G$-equivariant  morphism of graphs \[\psi\colon G\times_A \msf X   \to T\] defined on vertices by
\[A_i \mapsto A, \qquad H\mapsto H, \qquad \varphi(H) \mapsto tH.  \]
Note that any edge in $G\times_A \msf X$ of the form $\{gA_i,gaA_j\}$ for $g\in G$ and $a\in A$ is collapsed to the vertex $gA$ in $T$; and edges of the form $\{gA_i, gH\}$ and $\{gA_i, gtH \}$ are mapped to the edges $\{gA, gH\}$ and $\{gA, g\varphi(H)\}$ of $T$ respectively.  This map induces a  $G$-equivariant morphism of graphs  $\xi \colon \msf Z\to T$ such that the following diagram commutes, 
\[ \begin{tikzcd} 
 & G\times_A \msf X \arrow["\psi"]{dr} \arrow[swap,"\rho"]{dl}&  \\
\msf Z \ar[dashed, "\xi"]{rr}   & & T. 
\end{tikzcd}
\]
Indeed this diagram commutes since $\psi$ is $G$-equivariant and $\psi(tH)=\psi(\varphi(H))$.  

Observe that $\xi(z)=\psi(tH)$ and $\psi^{-1}(tH)=\{tH,\varphi(H)\}$, then $\rho(tH)=\rho(\varphi(H))$ implies that $\xi^{-1}(\xi(z))=\{z\}$. 

By definition of $T$, if a vertex $v$ is not in the $G$-orbit of $\xi(z)=tH$, then $v\in G/A$. Hence the partition $V(T)=G/A\sqcup G/H$` makes $T$ a bipartite $G$-graph. 
Any edge of $\mathsf{star}_T(A)$ is of the form $\{A,atH\}$ for some $a\in A$. Hence $\xi^{-1}(\mathsf{star}_T(A))=\rho(\psi^{-1}(\mathsf{star}_T(A))) = \rho ( \mathsf{X})$ and then the definition of $Z$ implies that $\rho ( \mathsf{X})$ is isomorphic $\mathsf{X}$. 
\end{proof}

\begin{proof}[Proof of Theorem~\ref{thm:HNNFine}]
The argument is the same as   the one used to prove Theorem~\ref{thm:CombinationFine}, the only difference is the use of  Proposition~\ref{lem:HNN} instead of Proposition~\ref{lem:pushout}.
\end{proof}

\section{Dehn functions and coarse isoperimetric functions}
\label{sec:addendum}

In this section, we recall the definition of coarse isoperimetric function of a graph and recall how   one can recover the relative Dehn function of a pair via Cayley-Abels graphs, see Theorem~\ref{thm:CoarseDehn}.  In the second part of the section, we discuss a technical result that provides bounds for coarse isoperimetric functions of graphs based on maps into trees, see Proposition~\ref{prop:TreeDehn}. These  results  are  used  to obtain bounds on the relative Dehn function of  fundamental groups of graph of groups based on the relative Dehn functions of the vertex groups, see Corollary~\ref{cor:DehnFunctions}.  

\subsection{Coarse isoperimetric functions}

A singular combinatorial map $X\to Y$ between 1-dimensional CW-complexes is a continuous map such that the restriction to each open 1-dimensional cell of $X$ is either a homeomorphism onto an open cell of $Y$ or its image is contained in the 0-skeleton of $Y$.  A singular combinatorial loop $c\colon I\to X$ is a singular combinatorial map  such that its domain is a CW-complex homeomorphic to a closed interval. The length $\mathsf{Len}(c)$ of $c$ is defined as the number of open 1-cells of $I$  that map homeomorphically to open cells of $X$. 

Let $\Gamma$ be a connected graph, regard it as a CW-complex, and consider the path-metric on $\Gamma$ obtained by regarding each edge as a segment of unit length. Let $k>0$. An \emph{$k$-filling} of a singular combinatorial loop $c\colon I \to \Gamma$ is a pair $(P,\Phi)$ consisting of a triangulation $P$ of the  $2$-disc $D^2$ and a singular combinatorial map $\Phi\colon P^{(1)}\to \Gamma$  such that $\Phi|_{\partial D}$ equals the closed path $c$ (after identifying the endpoints of the domain of $c$) and the image under $\Phi$ of the boundary of each 2-cell of $P$ is a set of diameter at most $k$ in $\Gamma$.  Define $|(P,\Phi)|$ to be the number of faces of $P$ and
\[\mathsf{area}^\Gamma_k(c):=\min\{|(P,\Phi)|\ \colon \ (P,\Phi) \text{ an $k$-filling of $c$}\}. \]
The \emph{$k$-coarse isoperimetric function $f_k^\Gamma\colon\mathbb{N}\to\mathbb{N}$} of $\Gamma$ is then defined to be
\[f^\Gamma_k(\ell):=\sup\{\mathsf{area}^\Gamma_k(c)\ \colon\ \mathsf{Len}(c)\leq \ell\}. \]
We say that $f^\Gamma_k$ is \emph{well-defined} if it takes only finite values. 
The graph $\Gamma$ is \emph{$k$-fillable} if $f_k^\Gamma$ is well-defined, and $\Gamma$ is \emph{fillable} if it is $k$-fillable for some integer $k$. Note that if $f_k^\Gamma$ is well-defined then $f_\ell^\Gamma$ is well-defined for all $\ell\geq k$. 

For two functions $f,g\colon \mathbb{N}\to \mathbb{N}$, define $f\preceq g$ if there exist constants $C,K,L\in \mathbb{N}$ such that
\[f(n)\leq Cg(Kn)+Ln.\]
We say that $f$ is \emph{asymptotically equivalent} to $g$ if $f \asymp g$ if $f\preceq g$ and $g\preceq f$.

\begin{proposition}\emph{\cite[Proposition~III.H.2.2]{BrHa99}}\label{Prop:Bridson:qi:invariance}
If $\Gamma$ and $\Gamma'$ are quasi-isometric connected graphs such that $\Gamma$ is fillable, then $\Gamma'$ is fillable  and $f_k^{\Gamma}\asymp f_k^{\Gamma'}$  for all sufficiently large integers $k$.
\end{proposition}

We conclude the subsection recalling two results in order to deduce Corollary~\ref{thm:CoarseDehn} which shows that the relative Dehn function of a finitely presented pair is equivalent to coarse isoperimetric fuctions of Cayley-Abels graphs. 

The following theorem is a re-statement of a result of Osin, see~\cite[Prop. 4.8]{HuMaSa21}. 
\begin{theorem}\cite[Thm. 2.53]{Osin06}\label{prop:Osin}
Let $G$ be a group and let $\mathcal{H}$ be a collection of subgroups. If $\Delta_{G,\mathcal{H}}$ is well-defined, then $\hat\Gamma(G,\mathcal{H})$ is fillable and $\Delta_{G,\mathcal{H}} \asymp f^{\hat\Gamma(G,\mathcal{H})}_k$ for all sufficiently large integers $k$.
\end{theorem}

\begin{theorem}\cite[Theorem E]{HuMaSa21}\label{thm:SLE-ThmE} 
Let $(G,\mathcal{H})$ be a finitely generated pair.  If $\hat\Gamma(G,\mathcal{H})$ is  fine and fillable, then
$(G, \mathcal{H})$ is finitely presented and  $\Delta_{G,\mathcal{H}}$ is well-defined.
\end{theorem}

As previously observed, the coned-off Cayley graphs of a   finitely generated pair $(G,\mathcal{H})$ are Cayley-Abels graphs of the pair. Theorem~\ref{thmX:uniqueCAgraph} states that all Cayley-Abels  graphs of a finitely generated pair are quasi-isometric, and if one of them is fine then all of them are fine. Moreover,  fillable and the class of coarse isoperimetric functions are quasi-isometry invariants of graphs by Proposition~\ref{Prop:Bridson:qi:invariance}.  Putting these results together with the two results above, and Theorem~\ref{thm:SLE-ThmE0}, one obtains the following corollary.

\begin{corollary}\label{thm:CoarseDehn}
 Let $\Gamma$ be a Cayley-Abels graph of finitely generated proper pair $(G,\mathcal{H})$.
\begin{enumerate}
\item If $\Delta_{G,\mathcal{H}}$ is well-defined, then $\Gamma$ is fine and fillable, and $\Delta_{G,\mathcal{H}} \asymp f^{\Gamma}_k$ for all sufficiently large integers $k$.
\item  If $\Gamma$ is  fine and fillable, then $(G, \mathcal{H})$ is finitely presented and  $\Delta_{G,\mathcal{H}}$ is well-defined.
\end{enumerate}
\end{corollary}

\subsection{Relative Dehn functions and Splittings }

Let $g\colon\mathbb{N} \to \mathbb{N}$ be a function.  Then $g$ is \emph{superadditive} if $g(m)+g(n)\leq g(m+n)$.   If $g(0)=0$ then the \emph{super-additive closure} $\overline{g}\colon\mathbb{N} \to \mathbb{N}$ of $g$ is the function 
\[ \overline{g}(n) = \max\left\{ \sum_{i=1}^k g(n_i) \mid k\in\mathbb{N},\ n_i\in\mathbb{N},\ \sum_{i=1}^kn_i=n \right\},\]
and it is an observation that $\bar g$ is the least super-additive function such that $g(n)\leq \overline{g}(n)$ for all $n$. Note that the requirement  $g(0)=0$  is necessary in order for $\bar g$ to be well-defined.    An outstanding open question raised by Mark Sapir is whether the Dehn function of any finite presentation is asymptotically equivalent to a superadditive function~\cite{GuSa99}. 

\begin{proposition}\label{prop:DehnRetraction}
Let $r\colon \Gamma \to \Delta$ be a retraction of graphs. If $\Gamma$ is $k$-fillable, then $\Delta$ is $k$-fillable and $f_k^\Delta(n)\leq f_k^\Gamma(n)$.
\end{proposition}
\begin{proof}
Let $c\colon I\to \Delta$ is a singular combinatorial loop. If $(P,\Phi)$ is a $k$-filling of $c$ in $\Gamma$ then it is an observation that $(P,r\circ\Phi)$ is a $k$-filling of $c$ in $\Delta$. Therefore $\mathsf{area}_k^\Delta(c)\leq \mathsf{area}_k^\Gamma(c)$ and the result follows.
\end{proof}

The following proposition is the main technical result of the section. 

\begin{proposition}\label{prop:TreeDehn}
Let $\xi\colon \Gamma \to T$ be a morphism of graphs where 
$T$ is a bipartite tree, say $V(T)=K \cup L$. Suppose $\xi^{-1}(v)$ is a single vertex for every  $v\in L$, and $\xi^{-1}(\mathsf{star}(v))$ is a connected subgraph for every $v\in K$.
Let $\Omega = \{ \xi^{-1}(\mathsf{star}(v)) \mid v\in K  \}$.

If there is $k>0$ such that each $\Delta\in\Omega$ is a $k$-fillable graph and 
\[g(n):= \sup\{f_k^\Delta(n)\mid  \Delta\in\Omega\} < \infty  \text{ for every $n$,}\] 
then $\Gamma$ is $k$-fillable 
and
\[f^\Gamma_k (n)\leq \overline{g}(n)\]
where $\overline{g}$ denotes the super-additive closure of the function $g\colon\mathbb{N}\to\mathbb{N}$.  
\end{proposition}
\begin{proof}
It is an observation that if $c_1$ and $c_2$ are singular combinatorial loops in $\Gamma$ with the same initial point, and both admit $k$-fillings, then the concatenated loop $c_1\cdot c_2$ admits a $k$-filling and
\[ \mathsf{Len}(c_1\cdot c_2) = \mathsf{Len}(c_1)+\mathsf{Len}(c_2)  \quad \text{and} \quad \mathsf{area_k}(c_1\cdot c_2) \leq  \mathsf{area_k}(c_1) + \mathsf{area_k}(c_2).\]

To prove that $f_k^\Gamma(n)\leq \overline{g}(n)$, we prove that if $c\colon I \to \Gamma$ is a singular combinatorial loop in $\Gamma$ then $\mathsf{area}^\Gamma_k(c)\leq \overline{g}(\mathsf{Len}(c))$.
  
Let $c\colon I \to \Gamma$ be a singular combinatorial loop in $\Gamma$.  Consider the loop $\xi\circ c$ in the tree $T$. The image of $\xi\circ c$ is a finite subtree $T_c$ of $T$. Let $\#T_c\cap K$ denote the number of vertices of $T_c$ that belong to $K$. To the loop $c$ assign the complexity $|c|=(\#T_c\cap K, \mathsf{Len}(c)) \in \mathbb{N}\times\mathbb{N}$.  Consider the lexicographical order on $\mathbb{N}\times\mathbb{N}$, and recall that this is well-ordered set.
We prove by induction on $(\#T_c\cap K, \mathsf{Len}(c))$  that $\mathsf{area}^\Gamma_k(c)\leq \overline{g}(\mathsf{Len}(c))$.  

Base case $|c|=(0,m)$.  Suppose that $T_c$ does not contain vertices in $K$. In this case the bipartite assumption on $T$ implies that $T_c$ consists of a single vertex $v$ in $L$. Since $\xi^{-1}(v)$ is a single vertex of $\Gamma$ it follows that $c$ is constant path and hence $\mathsf{Len}(c)=0$ and $\mathsf{area}^\Gamma_k(c)=0\leq \overline{g}(0)$.

Base case $|c|=(1,m)$. Suppose that the vertex set of $T_c$ contains a single vertex in $K$, say $v$. Then the bipartite assumption on $T$ implies that $T_c$ is a subgraph of $\mathsf{star}(v)$ and hence the image of $c$ is contained in the subgraph $\Delta=\xi^{-1}(\mathsf{star}(v))$. 
By assumption, $\Delta$ is $k$-fillable, and hence there is $k$-filling of $c$ in $\Delta$ which is trivially also a $k$-filling in $\Gamma$.  Hence \[\mathsf{area}_k^\Gamma(c)\leq \mathsf{area}_k^\Delta(c)\leq f^\Delta_k(\mathsf{Len}(c)) \leq g(\mathsf{Len}(c)) \leq \overline{g}(\mathsf{Len}(c)).\]

General case $|c|=(n, m)$ with $n\geq2$. For the inductive step, suppose that $T_c\cap K$ has at least two vertices in $K$. Without loss of generality, we can identify the domain $I$ of $c$ with the closed interval $[0,1]$ (with some CW-structure). Since $T_c$ is connected, the bipartite assumption on $T$, implies that $T_c$ contains a vertex $v\in L$ such that $v$ is not a leaf of $T_c$, in particular,  $T_c-\{v\}$ has at least two connected components. 
Then $(\xi\circ c)^{-1}(T_c-\{v\})$ is a disconnected open subset of $[0,1]$. Let $J_1$ be the closure of a connected component of $(\xi\circ c)^{-1}(T_c-\{v\})$.  By changing the initial point of the loop $c\colon I \to \Gamma$, we can assume that $J_1=[0,\alpha]$ for some $\alpha<1$. Let $J_2=[\alpha,1]$, and let $c_i$ be the restriction of $c$ to the interval $J_i$. Then $c_i$ is singular combinatorial loop, and $c$ is the concatenation $c_1\cdot c_2$.   Since $T_c-\{v\}$ is disconnected, it follows that $0<\mathsf{Len}(c_i)<\mathsf{Len}(c)$. Since $\#T_{c_i}\cap K \leq \#T_c\cap K$, it follows that $|c_i|<|c|$. Hence by induction  \[ \begin{split} \mathsf{area}^\Gamma_k(c)  & \leq \mathsf{area}^\Gamma_k(c_1) +\mathsf{area}^\Gamma_k(c_2) \\ & \leq  \overline{g}(\mathsf{Len}(c_1))+ \overline{g}(\mathsf{Len}(c_2)) \\
& \leq \overline{g}(\mathsf{Len}(c_1) + \mathsf{Len}(c_2)) = \overline{g}(\mathsf{Len}(c)) 
\end{split}\]
where the first inequality follows from the observation in the first paragraph of this proof, the second inequality uses the induction hypothesis, and the third one uses that $\overline{g}$ is superadditive. 
\end{proof}

\subsection{Proof of Theorem~\ref{cor:DehnFunctions}}

\begin{proof} 
The proofs of the first two statements are analogous, and the argument goes back to the method of proof of the corresponding theorems in the introduction. The  third statement is a consequence of the second one, see the proof of Corollary~\ref{thmx:II}\eqref{item:IIbb}. 

We prove the first statement and leave the proof of the second statement to the reader. We remark that the argument essentially re-proves Theorem~\ref{thmx:Io}\eqref{item:Ib}.

Let $\Gamma_i$ be a Cayley-Abels graph of $(G_i, \mathcal{H}_i  \cup\{K_i\})$ for $i=1,2$. Then $\Gamma_i$ has a vertex $x_i$ with $G_i$-stabilizer $K_i$.  Let $\Gamma$ be the $C$-pushout of $G\times_{G_1} \Gamma_1$ and $G\times_{G_2} \Gamma_2$
with respect to $(x_1,x_2)$.  Theorem~\ref{thm:CombinationFine} implies that $\Gamma$ is a Cayley-Abels graph of $(G_1\ast_C G_2, \mathcal{H}  \cup\{\langle K_1,K_2 \rangle\})$. By Proposition~\ref{lem:pushout}\eqref{cutpoint}, there is a morphism of graphs $\xi\colon \Gamma \to T$  that satisfies the hypothesis of Proposition~\ref{prop:TreeDehn}, namely, $T$ is a bipartite tree with $V(T)=K\cup L$   such that  $\xi^{-1}(v)$ is a single vertex for each $v\in L$, and $\xi^{-1}(\mathsf{star}(v))$ is isomorphic to $\Gamma_i$ for some $i=1,2$ for every $v\in K$. Corollary~\ref{thm:CoarseDehn} implies that  $\Gamma_1$ and $\Gamma_2$ are both $k$-fillable for some $k$. Then  Proposition~\ref{prop:TreeDehn} implies that $\Gamma$ is $k$-fillable and  
\[ f_k^\Gamma \preceq \overline{\max\{f_k^{\Gamma_1}, f_k^{\Gamma_2}\}} \asymp 
\max\left\{\overline{f_k^{\Gamma_1}}, \overline{f_k^{\Gamma_2}}\right\} .\]   
 Then Corollary~\ref{thm:CoarseDehn} implies 
that 
\[ \Delta \preceq  \max\left\{\overline{\Delta_1 }, \overline{\Delta_2} \right\}.\] 

On the other hand, the properties of the morphism $\Gamma\to T$ imply that there is a retraction  $\Gamma\to \Gamma_i$ and hence Proposition~\ref{prop:DehnRetraction} implies that 
$f_k^{\Gamma_i} \preceq f_k^\Gamma$ 
and therefore 
$ \Delta_i \preceq \Delta.$
\end{proof}

 \bibliographystyle{alpha}
 \bibliography{refs}

\end{document}